\renewcommand{\epsilon}{\varepsilon}
\renewcommand{\theta}{\vartheta}
\renewcommand{\phi}{\varphi}
\newcommand{\vp}{\varphi}
\newcommand{\ddbar}{\sqrt{-1} \partial \overline{\partial}}
\newcommand{\MA}{Monge-Amp\`{e}re}
\newcommand{\ri}{\rightarrow}
\newcommand{\DP}{Dirichlet problem}
\newcommand{\arccot}{\mathrm{arccot}}
\newcommand{\Hess}{\mathrm{Hess}_{\mathbb{C}} }
\begin{document}
	
		\newcounter{theor}
		\setcounter{theor}{1}
		\newtheorem{claim}{Claim}[section]
		\newtheorem{theorem}{Theorem}[section]
		\newtheorem{lemma}[theorem]{Lemma}
		\newtheorem{corollary}[theorem]{Corollary}
		\newtheorem{proposition}[theorem]{Proposition}
		\newtheorem{prop}[theorem]{Proposition}
		\newtheorem{question}{question}[section]
		\newtheorem{defn}[theorem]{Definition}
		\newtheorem{remark}[theorem]{Remark}
		\numberwithin{equation}{section}
		\title[The Cauchy-Dirichlet problem for parabolic dHYM equation]{The Cauchy-Dirichlet problem for parabolic deformed Hermitian-Yang-Mills equation}

		\author[L. Huang]{Liding Huang}
		\address{Westlake Institute for Advanced Study (Westlake University), 18 Shilongshan Road, Cloud Town, Xihu District, Hangzhou, P.R. China}
		\email{huangliding@westlake.edu.cn}

		\author[J. Zhang]{Jiaogen Zhang}
		\address{Jiaogen Zhang, School of Mathematical Sciences, University of Science and Technology of China, Hefei 230026, People's Republic of China }
		\email{zjgmath@ustc.edu.cn}
		\subjclass[2010]{58J35, 35J60, 53C07, 35B45}
		\keywords{deformed Hermitian-Yang-Mills equation, energy functional, heat flow}
		
		\begin{abstract}
			The purpose of this paper is to investigate  the parabolic deformed Hermitian-Yang-Mills equation with hypercritical phase in a smooth domain $\Omega\subset \mathbb{C}^{n}$. By using  $J$-functional, we are able to prove the convergence of solutions. As an application, we give an alternative proof of the Dirichlet problem for deformed Hermitian-Yang-Mills equation.
		\end{abstract}
		\maketitle

		\section{Introduction}
		Let $\Omega\subset \mathbb{C}^{n}$ be a smooth domain with smooth boundary $\partial \Omega$.  For any $u\in C^{2}(\overline{\Omega})$, we let $\Hess\,u=\ddbar u$ be the complex Hessian of $u$ and $\lambda=\lambda(\Hess\, u)$ be the eigenvalues of $\Hess\, u$. Throughout the paper, for each $\lambda=(\lambda_{1},\cdots,\lambda_{n})$ and $\lambda(\Hess\, u)=(\lambda_{1}(\Hess\, u),\cdots,\lambda_{n}(\Hess\, u))$, we denote
		\begin{equation*}
			\Theta(\lambda)=\sum_{i=1}^{n}\arccot\,\lambda_{i},\qquad \Theta(\Hess\, u)=\Theta(\lambda(\Hess\, u))
		\end{equation*}
		when no confusion will occur.

		The Dirichlet problem for the deformed Hermitian-Yang-Mills (dHYM in short) equation  can be written in the following form
		\begin{equation}\label{dp}
			\left\{\begin{array}{ll}
				\Theta(\Hess\, u)=\hat{\theta}\quad &\mathrm{in}~{\Omega}, \\[1mm]
				u=\varphi \quad &\mathrm{on}~ \partial{\Omega}.\\[1mm]
			\end{array}\right.
		\end{equation}
		Here $\varphi$ is a  smooth functions on $\partial\Omega$. First, we may extend $\varphi$ to a smooth function (still denoted by $\varphi$ for convenience) such that
		\begin{equation}\label{initial}
			0<\Theta(\Hess\, \varphi)<\frac{\pi}{2}, \quad \mathrm{on\,} \ \overline{\Omega}.
		\end{equation}
		We say the  \eqref{dp} is  \textit{supercritical} if $\hat{\theta}\in (0,\pi)$, and  of \textit{hypercritical} if $\hat{\theta}\in (0,\frac{\pi}{2})$. In the sequel we are interested in the later case.

		The Dirichlet problem \eqref{dp} was first posed by Harvey-Lawson \cite{HL82}. Under a certain assumption on the geometry of $\partial \Omega$, Caffarelli-Nirenberg-Spruck \cite{CNS85} established the existence of solutions. Viscosity solutions were also considered by Harvey-Lawson \cite{HL09}.  Collins-Picard-Wu \cite{CPW17} proved the existence of classical solutions provided there exists a subsolution on $\Omega$.  For more related works, we refer to \cite{CKNS85,Gu98, Guan14, Tru95} and references therein.

		The equation  \eqref{dp} is a local version of the dHYM equation over closed manifolds, which has  also been extensively researched motivated by  mirror symmetry.
		Jacob-Yau \cite{JY} initially studied the dHYM equation and it was solved by Collins-Jacob-Yau \cite{CJY} under the existence of a notion of subsolution in the case of supercritical phase. Recently, Collins-Yau \cite{CY}  introduced an infinite dimensional GIT approach to the dHYM equation. One of  the important roles in this theory is the so-called geodesic equation which can be reduced to a Dirichlet problem for the degenerate dHYM equation.  
		

		On the other hand, to study the dHYM equation, the theory of geometric flows have been well developed at the same time.   A line bundle version of the Lagrangian mean curvature flow was introduced by Jacob-Yau \cite{CY}.  Takahashi \cite{TR} defined the tangent Lagrangian phase flow, which is expected to be effective  in the GIT framework. Recently Fu-Yau-Zhang \cite{FYZ20} considered a new geometric flow which resolved the dHYM equation on compact K\"ahler manifolds. The twisted version of Fu-Yau-Zhang's flow  can be also used to prove the equivalence between coerciveness  of the $J$-functional and the existence of solutions to the dHYM equations (see Chu-Lee \cite{CL}).

		Before stating our main results it is necessary for us to collect some notations. For each $T>0$,
		\[\mathcal{Q}_{T}=\Omega\times (0,T],\, \, \mathcal{B}=\Omega\times \{0\},\, \, \mathcal{S}=\partial\Omega\times[0,T],\, \, \partial_{p}\mathcal{Q}_{T}=\mathcal{B}\cup  \mathcal{S}.  \]
		Here, $\mathcal{B}$ is the $bottom$ of $\mathcal{Q}_{T}$, $\mathcal{S}$ is the $side$ of $\mathcal{Q}_{T}$ and $\partial_{p}\mathcal{Q}_{T}$ is the $parabolic$ $boundary$ of $\mathcal{Q}_{T}$.

	Inspired by the work in \cite{FYZ20},	in this paper we shall study the Cauchy-Dirichlet problem about parabolic dHYM equation 
		\begin{equation}\label{flow}
			\left\{\begin{array}{ll}
				\partial_{t}u=\cot	\Theta(\Hess\, u)-\cot\hat{\theta}\quad &\mathrm{in}~{\mathcal{Q}_{T}}, \\[1mm]
				u=\psi\quad &\mathrm{on}~ \partial_{p}\mathcal{Q}_{T},\\[1mm]
			\end{array}\right.
		\end{equation}
		where $\partial_{t}u=\frac{\partial u}{\partial t}$, $\Hess\,u$ denotes the spatial Hessian of $u$, $\hat{\theta}$ is a given constant, and  $\psi=\psi(x,t)$ is a smooth function on $\partial_{p}{\mathcal{Q}_{T}}$ satisfies 
		\begin{enumerate}
			\item[C1)]  $\psi(\cdot,0)=\varphi$ on $\overline{\Omega}$, \vspace{1mm}
			\item[C2)] $0<\theta_0<\Theta(\Hess\, \psi)<\frac{\pi}{2}-\theta_0$ on $\mathcal{S}$ with  $\theta_{0}\in(0,\,\frac{\pi}{4})$ is a given constant for any $T>0$. \vspace{1mm}
		\end{enumerate}
		We remark that such function $\psi$ is always exists, for instance, we can choose $\psi(\cdot,t)=\varphi(\cdot)$ be a constant valued function in $[0,T]$.

		The main result concerns the problems of the long time existence of flow \eqref{flow} and the solvability of \eqref{dp}. The statement is as follows.
		\begin{theorem}\label{main theorem}
			Suppose there exists a parabolic subsolution $\underline{u}\in C^{2,1}(\overline{\Omega}\times [0,\,\infty))$ for the flow \eqref{flow}, i.e. for every $T>0$,
			\begin{equation}\label{subsolution-1}
				\left\{\begin{array}{ll}
					\partial_{t}\underline{u}\leq 	\cot\Theta(\mathrm{Hess}\, \underline{u})-\cot \hat{\theta} &\mathrm{in}~{\mathcal{Q}_{T}}, \\[1mm]
					\underline{u}\leq \varphi \quad \mathrm{on}~\mathcal{B} \quad\textrm{and}\quad \underline{u}= \psi  &\mathrm{on}~\mathcal{S}.\\[1mm]
				\end{array}\right.
			\end{equation}
			Assume that $\Theta(\Hess \,\underline{u})\in(\theta_0, \frac{\pi}{2}-\theta_0)$.
			Then the solution $u$ of flow \eqref{flow} exists for all time. Furthermore, as $t\rightarrow +\infty$, $u(\cdot,t)$ approaches to the unique solution $u_{\infty}$ for \eqref{dp} in $C^{\infty}(\overline{\Omega})$.
		\end{theorem}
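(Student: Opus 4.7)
\textbf{Proof plan for Theorem \ref{main theorem}.} The plan is to split the theorem into three stages: short-time existence, uniform a priori estimates which yield long-time existence by a standard continuation argument, and convergence as $t\to\infty$ driven by the $J$-functional.

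First, I would establish short-time existence by linearizing \eqref{flow} at the initial datum $\varphi$. Condition \eqref{initial} combined with $\hat\theta\in(0,\pi/2)$ forces $\Theta(\Hess\,\varphi)\in(0,\pi/2)$, so $\cot\Theta$ has strictly positive symbol and the linearization is uniformly parabolic near $\varphi$; standard parabolic Schauder theory then produces a smooth solution on a maximal interval $[0,T^*)$. To extend past $T^*$, the task is to derive uniform $C^{k,\alpha}$ bounds on $\overline\Omega\times[0,T]$ for every finite $T$.

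The a priori estimates proceed in the usual order. (i) $C^0$: the parabolic maximum principle traps $u$ between $\underline u$ and an upper barrier, for instance the solution of the linear heat equation with boundary data $\psi$. (ii) $\partial_t u$: differentiating \eqref{flow} in $t$ and invoking the maximum principle bounds $\partial_t u$ in terms of its values on the parabolic boundary. (iii) Phase trapping: since $\partial_t u=\cot\Theta-\cot\hat\theta$ is now bounded, $\cot\Theta$ stays in a compact subinterval of $(0,\infty)$, which together with comparison against $\underline u$ keeps $\Theta(\Hess\,u)$ inside $(\theta_0,\pi/2-\theta_0)$. (iv) Gradient and boundary gradient estimates follow from the hypercritical phase via barriers built from $\underline u$ and $\psi$, paralleling the elliptic arguments of Caffarelli-Nirenberg-Spruck and Collins-Picard-Wu in the parabolic setting. (v) The second-order estimate combines an interior maximum-principle bound on the largest Hessian eigenvalue, exploiting concavity of $-\cot\Theta$ on $\{\Theta\in(0,\pi/2)\}$, with a boundary estimate where the subsolution controls tangential-normal terms. (vi) Parabolic Evans-Krylov delivers $C^{2,\alpha}$ and a Schauder bootstrap promotes this to $C^{k,\alpha}$, completing long-time existence.

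For convergence, I would differentiate the $J$-functional along the flow. Taking $\psi$ time-independent as permitted after the statement of C1--C2, the Fu-Yau-Zhang style computation yields $\tfrac{d}{dt}J(u(\cdot,t))=-\int_\Omega(\partial_t u)^2\,\omega^n$, with no boundary contribution because $\partial_t u$ vanishes on $\mathcal{S}$. Hence $J$ is monotone decreasing and bounded below, giving $\int_0^\infty\!\!\int_\Omega(\partial_t u)^2<\infty$. Combined with the uniform $C^{k,\alpha}$ bounds, Arzel\`a-Ascoli extracts subsequential limits $u_\infty\in C^\infty(\overline\Omega)$ satisfying $\Theta(\Hess\,u_\infty)=\hat\theta$ and $u_\infty=\varphi$ on $\partial\Omega$; the comparison principle for \eqref{dp} in the hypercritical regime forces uniqueness of $u_\infty$ and upgrades subsequential convergence to convergence of the full family in $C^\infty(\overline\Omega)$.

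The principal obstacle I anticipate is step (v): a global $C^2$ bound coupling a subsolution-trick computation at the boundary with an interior maximum-principle estimate for the largest Hessian eigenvalue under a flow whose symbol depends nonlinearly on $\Hess\,u$. Equally central is holding the phase strictly inside $(\theta_0,\pi/2-\theta_0)$ along the flow, since both concavity and uniform parabolicity rest on this trap; extracting it from the subsolution hypothesis is where the hypercritical assumption genuinely earns its keep.
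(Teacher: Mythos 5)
Your overall architecture (short-time existence, a priori estimates via subsolution and parabolic maximum principle, convergence via monotonicity of the $J$-functional, then uniqueness to upgrade subsequential to full convergence) matches the paper's, and the hard points you flag — the global $C^2$ estimate at the boundary and the phase trap — are exactly the ones the paper spends most of Section 2 on. Two small calibration issues: the operator that is concave and that drives the interior $C^2$ estimate is $\cot\Theta$, not $-\cot\Theta$; and you will want to obtain the phase trap (your step (iii)) \emph{before} the $C^0$ estimate, since the paper's upper bound $u\le \sup_{\mathcal{S}}\underline u$ relies on $u(\cdot,t)$ being plurisubharmonic, which is itself a consequence of $\Theta(\Hess\,u)<\pi/2$.

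The genuine gap is in the convergence step. From monotonicity of $J$ you get $\int_0^\infty S(t)\,dt<\infty$ with $S(t)=\int_\Omega(\partial_t u)^2\,\sin(\hat\theta)\,\mathrm{Im}(\Hess\,u+\sqrt{-1}\omega_0)^n$, but integrability of $S$ does \emph{not} imply $S(t)\to0$, and without $S(t)\to 0$ you cannot assert that \emph{every} subsequential limit of $u(\cdot,t)$ solves the stationary equation $\Theta(\Hess\,u_\infty)=\hat\theta$. Integrability only furnishes \emph{some} sequence $t_i\to\infty$ with $S(t_i)\to 0$; along a different sequence $S$ could fail to decay, and then the corresponding Arzel\`a–Ascoli limit need not solve \eqref{dp}, so the uniqueness argument does not close. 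The paper fills this hole by differentiating $S$, using the a priori bounds to derive $S'\le C(S+\sqrt S)$, integrating this ODE to get $\sqrt{S(t)}+1\le e^{C(t-s)}(\sqrt{S(s)}+1)$, and combining that self-control with $\int_0^\infty S<\infty$ to conclude $S(t)\to0$ (Claim \ref{St goes to 0}). Only after that does the contradiction/uniqueness argument yield full $C^\infty(\ov\Omega)$ convergence. You should add this differential-inequality step (or an equivalent uniform-continuity bound on $t\mapsto S(t)$ coming from parabolic regularity) to make the passage from integrability to convergence rigorous.
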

	
	\begin{remark}
	    Here we consider the hypercritical case since we need this assumption in the proof of $C^0$ estimates (see Proposition \ref{C0} ).
	\end{remark}
		The paper is organized as follows. In section 2, we first recall the definition and some  properties of parabolic subsolutions which play important roles in the proof of global $C^{2}$ estimates. Secondly, using the maximum principle, we prove the $C^{0}$ and the $C^{1}$ estimates. Finally, the main challenge is to prove the boundary second order estimates. It can be solved by employing the barrier function in \cite{Guan14} and the idea of Trudinger \cite{Tru95}. In section 3 we generalize the definitions of  Calabi-Yau functional and $J$-functional over smooth domains in $\mathbb{C}^{n}$. Using the monotonicity of $J$-functional along \eqref{flow},   we obtain the convergence of flow \eqref{flow} via the arguments of \cite{HouLi,PS}.
		
\bigskip

\noindent {\bf Acknowledgements.} The authors would like to thank the referees for
many useful suggestions and comments. 
		
		\section{A priori estimates}
		By a simple computation, we get
		\begin{equation*}
			\cot\Theta(\Hess\,u)=\frac{\textrm{Re}\det(\sqrt{-1}\delta_{kl}+u_{k\bar{l}})}{\textrm{Im}\det(\sqrt{-1}\delta_{kl}+u_{k\bar{l}})}.
		\end{equation*}
		Let us denote
		\begin{equation*}
			F^{i\bar{j}}(\Hess\,u)=\frac{\partial}{\partial u_{i\bar{j}}}\cot\Theta(\Hess\,u),
		\end{equation*}
		which determines a positive definite Hermitian form. Then
  \[Lu=\partial_{t}u-F^{i\bar{j}}u_{i\bar{j}}\] is  a parabolic linearized operator of \eqref{flow}.
		We assume that there exist solutions of the flow \eqref{flow} on $\Omega\times [0,\, T')$.  
		In this section, we will prove the estimates of solutions on $\mathcal{Q}_T$, where $T<T'$,
		\subsection{Subsolutions}
		Let us
		denote $\Gamma_{n}$ be the positive orthant in $\mathbb{R}^{n}$ and
		\begin{equation*}
			\Gamma=\{\lambda\in \mathbb{R}^{n}:\  0<\Theta(\lambda)< \frac{\pi}{2}\},\qquad
			\Gamma^{\sigma}=\{\lambda\in \Gamma: 0<\Theta(\lambda)< \sigma\},
		\end{equation*}
		where $\sigma\in(0, \frac{\pi}{2})$ is a constant.
		\begin{defn}\cite{SG}\label{sub}
			We say that a smooth function $\underline{u}:\overline{\Omega}\rightarrow \mathbb{R}$ is a $\mathcal{C}$-subsolution of \eqref{dp} if at each point $x\in \overline{\Omega}$, the set
			\begin{equation*}
				\left\{\lambda\in \Gamma: \Theta(\lambda)=\hat{\theta}, \text{\ and\ }  \lambda-\lambda(\Hess\,\underline{u})\in \Gamma_{n}\right\}
			\end{equation*}
			is bounded.
		\end{defn}
		For the   \eqref{dp}, Collins-Jacob-Yau \cite{CJY}  (see also  Collins-Picard-Wu \cite{CPW17}) provided an explicit description of subsolutions.
		\begin{lemma}\label{subsolution}
			A smooth function $\underline{u}: \overline{\Omega}\rightarrow \mathbb{R} $ is a subsolution of  \eqref{dp} if and only if at each point $x\in \overline{\Omega}$,  
			\begin{equation*}
				\max_{1\leq j\leq n}	\sum_{i\neq j }\arccot\, \lambda_{i}(\Hess\,\underline{u})(x)<\hat{\theta}.
			\end{equation*}
		\end{lemma}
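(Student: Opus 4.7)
The plan is to fix a point $x\in\overline{\Omega}$, set $\mu_i=\lambda_i(\Hess\,\underline{u})(x)$, and reformulate the $\mathcal{C}$-subsolution condition as requiring the set
\[
S_x=\{\lambda\in\Gamma:\Theta(\lambda)=\hat{\theta},\ \lambda_i>\mu_i\text{ for all } i\}
\]
to be bounded in $\mathbb{R}^n$. Since every $\lambda\in S_x$ is already bounded below by $\mu$, the question reduces to obtaining an upper bound on each coordinate, and the argument rests entirely on the strict monotonicity and continuity of $\arccot:\mathbb{R}\to(0,\pi)$ together with the intermediate value theorem.

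For the sufficiency ($\Leftarrow$), I would assume $\max_j\sum_{i\neq j}\arccot\mu_i<\hat{\theta}$ and set $\delta=\hat{\theta}-\max_j\sum_{i\neq j}\arccot\mu_i>0$. For any $\lambda\in S_x$ and any index $j$, the inequalities $\lambda_i>\mu_i$ for $i\neq j$ give $\arccot\lambda_i<\arccot\mu_i$, hence
\[
\arccot\lambda_j\;=\;\hat{\theta}-\sum_{i\neq j}\arccot\lambda_i\;>\;\hat{\theta}-\sum_{i\neq j}\arccot\mu_i\;\geq\;\delta,
\]
so $\lambda_j<\cot\delta$, proving that $S_x$ is bounded.

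For the necessity ($\Rightarrow$), I would argue by contrapositive: suppose some index $j$ satisfies $\sum_{i\neq j}\arccot\mu_i\geq\hat{\theta}$. For each sufficiently small $\eta>0$, the intermediate value theorem yields $\lambda_i\in(\mu_i,\infty)$ for $i\neq j$ with $\sum_{i\neq j}\arccot\lambda_i=\hat{\theta}-\eta$, because the map $t\mapsto\arccot t$ on $(\mu_i,\infty)$ sweeps out $(0,\arccot\mu_i)$, so the total partial sum attains every value in $(0,\sum_{i\neq j}\arccot\mu_i)$, an interval that contains $\hat{\theta}-\eta$ for all $\eta$ small enough. Setting $\lambda_j=\cot\eta$ then gives $\Theta(\lambda)=\hat{\theta}$, and $\lambda_j>\mu_j$ holds once $\eta$ is small. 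Letting $\eta\to 0^+$ produces a sequence in $S_x$ with $\lambda_j\to+\infty$, contradicting the boundedness of $S_x$.

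The only delicate point is the borderline case $\sum_{i\neq j}\arccot\mu_i=\hat{\theta}$ in the necessity direction: the strict inequalities $\lambda_i>\mu_i$ prevent the partial sum $\sum_{i\neq j}\arccot\lambda_i$ from actually reaching $\hat{\theta}$, but it can be made arbitrarily close, and this is precisely what drives $\lambda_j$ to infinity. Once this care is taken, the proof is elementary and uses only the intermediate value theorem and the monotonicity of $\arccot$.
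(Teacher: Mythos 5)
The paper states this lemma without a proof of its own, simply attributing the explicit description of $\mathcal{C}$-subsolutions to Collins--Jacob--Yau \cite{CJY} and Collins--Picard--Wu \cite{CPW17}; there is therefore no internal argument to compare against. Your proof is correct and is the standard verification one finds in those references: for sufficiency, the strict componentwise inequalities force $\arccot\lambda_j>\hat{\theta}-\max_j\sum_{i\neq j}\arccot\mu_i>0$, bounding each $\lambda_j$ above by a fixed cotangent; for necessity, the intermediate value theorem lets you push the partial sum $\sum_{i\neq j}\arccot\lambda_i$ arbitrarily close to $\hat{\theta}$ while keeping $\lambda_i>\mu_i$, which sends $\lambda_j=\cot(\hat{\theta}-\sum_{i\neq j}\arccot\lambda_i)$ to $+\infty$ and exhibits an unbounded sequence in the level set. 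One small remark: the ``delicate borderline case'' $\sum_{i\neq j}\arccot\mu_i=\hat{\theta}$ is not actually special---your construction works verbatim whether this quantity equals or strictly exceeds $\hat{\theta}$, since in both cases $\hat{\theta}-\eta$ lies in the open range $(0,\sum_{i\neq j}\arccot\mu_i)$ for $\eta$ small. You could also note, for completeness, that the constraint $\lambda\in\Gamma$ in $S_x$ is automatically satisfied once $\Theta(\lambda)=\hat{\theta}\in(0,\tfrac{\pi}{2})$, so it imposes nothing extra in the hypercritical setting.
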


		We shall recall the terminology of parabolic subsolutions for the parabolic dHYM equations, as already defined in \cite[Definition 1]{PT17}. 
		\begin{defn}\label{parabolic sub}  
			We say a function $\underline{u}_{p}\in C^{2,1}(\mathcal{Q}_{T})$ is a parabolic $\mathcal{C}$-subsolution for \eqref{flow}, if   there exist uniform constants $\delta_{0}, R_0>0$, such that for any $(x, t)\in\mathcal{Q}_T$,
			\begin{equation}\label{cone condition}
				\cot		\Theta(\lambda(\Hess\,\underline{u}_{p})+\mu )- \partial_{t}\underline{u}_{p}+\tau=\cot\hat{\theta}, ~ \mu+\delta_{0} I\in \Gamma_{n}~\textrm{and}~\tau>-\delta_{0}
			\end{equation}
			implies that $|\mu|+|\tau|<R_0 $. Here  $I$ denotes the vector $(1,\cdots,1)$ of $n$-tuples.
		\end{defn}


		
		The following proposition is principle, which is inspired by the recent work of Phong-T\^o \cite{PT17}.
		\begin{proposition}\label{key}
			Let $\delta_{0}, R_0>0$ be uniform constants as in Definition \ref{parabolic sub}  and $\underline{u}$ be a parabolic subsolution of \eqref{flow}. Then there exists a constant $\kappa=\kappa(\delta_0,R_0)$ such that if $|\lambda(\Hess\,u)|\geq R_0$ we have
			\begin{align*}
				&\mathrm{either}&&  L(u-\underline{u})\geq \kappa\mathcal{F}\,,\\
				&\mathrm{or }&& F^{k\bar{k}}(\Hess\,u)\geq \kappa\mathcal{F}\,, \qquad \text{for all } k=1,\dots,n\,.
			\end{align*}
			Here and hereafter we denote $\mathcal{F}=	\sum_{i=1}^{n}F^{i\bar{i}}(\Hess\,u)$.
		\end{proposition}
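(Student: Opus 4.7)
The plan is to adapt Phong-Tô's parabolic version \cite{PT17} of Székelyhidi's dichotomy lemma for $\mathcal{C}$-subsolutions. At a fixed spacetime point, diagonalize $\Hess u$ by a unitary change of frame, so that $F^{i\bar{j}}$ is also diagonal with
\[F^{i\bar{i}} = \csc^{2}\Theta(\lambda)/(1+\lambda_{i}^{2}),\]
where $\lambda_{1}\le\cdots\le\lambda_{n}$ are the eigenvalues of $\Hess u$ (all positive in the hypercritical regime, so $\csc^{2}\Theta$ is uniformly controlled). Let $\underline\lambda_{1}\le\cdots\le\underline\lambda_{n}$ be the sorted eigenvalues of $\Hess\underline u$, let $B_{ii}$ denote its diagonal entries in this frame, and set $\mu=\lambda-\underline\lambda$ and $\tau=\partial_{t}\underline u-\partial_{t}u$. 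The flow equation $\cot\Theta(\lambda)-\partial_{t}u=\cot\hat\theta$ is then equivalent to the cone equation
\[\cot\Theta(\underline\lambda+\mu)-\partial_{t}\underline u+\tau=\cot\hat\theta\]
appearing in Definition \ref{parabolic sub}. A Schur--Horn majorization combined with the rearrangement inequality (exploiting that $F^{i\bar{i}}$ is decreasing in $\lambda_{i}$ while the ordering of $\underline\lambda_{i}$ is opposite) yields the trace inequality $\sum_{i}F^{i\bar{i}}B_{ii}\ge\sum_{i}F^{i\bar{i}}\underline\lambda_{i}$, so that
\[L(u-\underline u)=\partial_{t}(u-\underline u)+\sum_{i}F^{i\bar{i}}(B_{ii}-\lambda_{i})\ge -\tau-\sum_{i}F^{i\bar{i}}\mu_{i}.\]

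Since $|\underline\lambda|$ is uniformly bounded by $\|\underline u\|_{C^{2}}$, by enlarging $R_{0}$ by this bound we may assume $|\lambda|\ge R_{0}$ forces $|\mu|+|\tau|\ge R_{0}$, so Definition \ref{parabolic sub} implies at least one of the cone conditions must fail, giving either (A) $\tau\le-\delta_{0}$ or (B) $\mu_{i_{0}}\le-\delta_{0}$ for some index $i_{0}$. The proof then closes each case by a contradiction/compactness argument. Assume alternative (b) of the proposition fails, so some $F^{k\bar{k}}<\kappa\mathcal{F}$, and also that $L(u-\underline u)<\kappa\mathcal{F}$. Normalizing by $\mathcal{F}$ and letting $\kappa\to 0$ along a subsequence, the weights $F^{i\bar{i}}/\mathcal{F}$ concentrate with the $k$-th weight tending to zero; in the limit the failure of both alternatives produces a blow-up sequence $(\mu,\tau)$ satisfying $\mu+\delta_{0}I\in\Gamma_{n}$ and $\tau>-\delta_{0}$ with $|\mu|+|\tau|\to\infty$, contradicting the boundedness guaranteed by Definition \ref{parabolic sub}. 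In Case (B), the monotonicity of $F^{i\bar{i}}$ in $\lambda_{i}$ ensures that $F^{i_{0}\bar{i_{0}}}$ remains a definite fraction of $\mathcal{F}$ (otherwise $\lambda_{i_{0}}$ would be among the largest eigenvalues, contradicting $\mu_{i_{0}}<0$), and the gap $-\mu_{i_{0}}\ge\delta_{0}$ then upgrades the bound to $L(u-\underline u)\ge\kappa'\mathcal{F}$.

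The main obstacle is the careful bookkeeping in the limiting analysis: one must simultaneously track the concentration of the weights $F^{i\bar{i}}/\mathcal{F}$, the eigenvalue ordering of $\lambda_{i}$, and the geometry of the cone conditions, all while bridging the gap between the basis-dependent diagonal entries $B_{ii}$ and the invariant eigenvalues $\underline\lambda_{i}$ via Schur--Horn majorization. This is the parabolic refinement, due to Phong-Tô, of Székelyhidi's rearrangement/compactness argument, and it is precisely the reason Definition \ref{parabolic sub} is phrased in terms of cone conditions on $(\mu,\tau)$ rather than pointwise eigenvalue comparisons.
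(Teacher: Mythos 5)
Your proposal and the paper's proof diverge in what they take as given versus what they prove. The paper does \emph{not} re-derive the Phong--T\^o dichotomy; it simply cites \cite[Lemma 3]{PT17}, which gives exactly the stated alternative once one knows that $\underline{u}$ is a parabolic $\mathcal{C}$-subsolution in the sense of Definition \ref{parabolic sub}. The entire content of the paper's argument is the \emph{missing implication}: that a function satisfying the pointwise differential inequality \eqref{subsolution-1} actually satisfies the cone condition of Definition \ref{parabolic sub}. This is carried out by invoking the equivalent criterion from \cite[Lemma 8]{PT17}, namely that for each $i$
\[
\lim_{s\rightarrow\infty}\cot\Theta(\underline{\lambda}+s\,\mathbf{e}_{i})-\partial_{t}\underline{u}>\epsilon+\cot\hat{\theta},
\]
and then establishing this with $\epsilon=\tfrac12\min_{i}\inf\arccot\,\underline{\lambda}_{i}$ via a short concavity computation: $\arccot\,\underline{\lambda}_{i}=\Theta(\underline{\lambda})-\sum_{j\neq i}\arccot\,\underline{\lambda}_{j}\le\cot(\sum_{j\neq i}\arccot\,\underline{\lambda}_{j})-\cot\Theta(\underline{\lambda})\le\cot(\sum_{j\neq i}\arccot\,\underline{\lambda}_{j})-(\partial_{t}\underline{u}+\cot\hat{\theta})$.

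Your proposal skips this step entirely. When you write that ``Definition \ref{parabolic sub} implies at least one of the cone conditions must fail,'' you are invoking the cone condition as though it were part of the hypothesis on $\underline{u}$, whereas the hypothesis in the proposition (and in Theorem \ref{main theorem}) is only the pointwise inequality \eqref{subsolution-1}. Without verifying that \eqref{subsolution-1} produces a bona fide parabolic $\mathcal{C}$-subsolution with uniform $\delta_{0},R_{0}$, the rest of the argument cannot be launched. That verification is precisely what the paper's proof supplies, and it is the non-trivial part.

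Beyond this gap, the portion of your proposal that reconstructs the dichotomy itself (Schur--Horn majorization, the rearrangement inequality $\sum_{i}F^{i\bar{i}}\underline{U}_{i\bar{i}}\ge\sum_{i}F^{i\bar{i}}\underline{\lambda}_{i}$, the splitting into cases $\tau\le-\delta_{0}$ or $\mu_{i_{0}}\le-\delta_{0}$) is in the right spirit, but the closing ``normalizing by $\mathcal{F}$ and letting $\kappa\to0$ along a subsequence'' is not how that lemma is proved: $\kappa$ is an explicit, fixed constant (essentially $\delta_{0}$ up to a dimensional factor), determined by a direct case analysis, not by a compactness/contradiction argument in which $\kappa$ tends to zero. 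In short: you have re-derived (somewhat loosely) the part the paper cites, and omitted the part the paper proves.
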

		\begin{proof}
			Using the argument of Phong-T\^o \cite[Lemma 3]{PT17}, we shall prove
			that $\underline{u}$ is also a parabolic $\mathcal{C}$-subsolution. As pointed in  \cite[Lemma 8]{PT17}, it suffices to show there exists a uniform constant $\epsilon>0$ such that for any $i=1,2,
			\cdots,n$,
			\begin{equation}\label{equivalent state}
				\lim_{s\rightarrow \infty} \cot\Theta(\underline{\lambda}+s\,\textbf{e}_{i})-\partial_{t}\underline{u}>\epsilon+\cot\hat{\theta},
			\end{equation}
			where we have used the simplified notation $\lambda_{j}(\Hess\,\underline{u})=\underline{\lambda}_{j}$. It is clear that $\lim_{s\rightarrow \infty} \Theta(\underline{\lambda}+s\textbf{e}_{i})=\sum_{j\neq i}\arccot\, \underline{\lambda}_{j}$. Therefore \eqref{equivalent state} is equivalent to 
			\begin{equation}\label{easier criterion of subsolution}
				\cot\Big(\sum_{j\neq i}\arccot\, \underline{\lambda}_{j}\Big)-(\partial_{t}\underline{u}+\cot\hat{\theta})>\epsilon,
			\end{equation}
			which can be verified if we set $\epsilon=\frac{1}{2}\min_{1\leq i\leq n}\inf_{\overline{\Omega}\times [0,\infty)}\arccot\, \underline{\lambda}_{i}.$
			
			Indeed, using the concavity of $\lambda\mapsto \cot\Theta(\lambda)$, we get 
			\[
			\begin{split}
				\arccot\, \underline{\lambda}_{i}=&\Theta(\underline{\lambda})-\sum_{j\neq i}\arccot\, \underline{\lambda}_{j}\\
				\leq & 	\cot\Big(\sum_{j\neq i}\arccot\, \underline{\lambda}_{j}\Big)-\cot \Theta(\underline{\lambda})\\
				\leq &\cot\Big(\sum_{j\neq i}\arccot\, \underline{\lambda}_{j}\Big)-(\partial_{t}\underline{u}+\cot\hat{\theta}).
			\end{split}
			\]
			Here, in the last inequality we have used \eqref{subsolution-1}. By the definition of $\epsilon$, then this proves \eqref{easier criterion of subsolution}.
		\end{proof}

		\subsection{The $\partial_{t}u$ estimate.}
		\begin{proposition}\label{hyperut} 
			On $\overline{\mathcal{Q}}_{T}$, there exists a constant $\delta_1\in (0,\frac{\pi}{4})$ depending on $\psi$, $\theta_0$ and $\hat{\theta}$ such that
			\begin{itemize}
				\item[(a)] $\frac{\pi}{2}-\delta_1> \Theta(\Hess\,u)>\delta_1.$
				\vspace{1mm}
				\item[(b)]  $|\partial_{t}u|\leq \frac{1}{\delta_{1}}.$
				\item[(c)]  $\mathcal{F}>\delta_1 .$
			\end{itemize}
		\end{proposition}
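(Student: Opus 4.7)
The plan is to obtain (b) first via a parabolic maximum principle applied to $v=\partial_{t}u$, derive (a) as an algebraic consequence of (b) together with the hypercritical assumption, and obtain (c) by a direct pointwise computation of $\mathcal{F}$ in eigenvalue coordinates. The key observation driving the whole argument is that differentiating \eqref{flow} in $t$ produces a homogeneous linear parabolic equation for $\partial_{t}u$ with no zeroth-order term.

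Setting $v=\partial_{t}u$ and differentiating \eqref{flow} in $t$, one finds
\[
\partial_{t}v = F^{i\bar{j}}(\Hess\,u)\, v_{i\bar{j}}, \qquad\text{i.e.,}\qquad Lv = 0.
\]
Since $L$ has no zeroth-order term, the parabolic maximum principle gives $\sup_{\overline{\mathcal{Q}}_{T}} v = \sup_{\partial_{p}\mathcal{Q}_{T}} v$ and similarly for the infimum. On the side $\mathcal{S}$ the boundary condition $u=\psi$ yields $v = \partial_{t}\psi$, which is bounded by the smoothness of $\psi$. On the bottom $\mathcal{B}$ one has $v|_{t=0}=\cot\Theta(\Hess\,\varphi)-\cot\hat{\theta}$; by \eqref{initial} and compactness of $\overline{\Omega}$, $\Theta(\Hess\,\varphi)$ is pinched strictly inside $(0,\pi/2)$, so this initial value is bounded. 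This proves (b). For (a) one rewrites \eqref{flow} as $\cot\Theta(\Hess\,u)=\partial_{t}u+\cot\hat{\theta}$; the bound on $|\partial_{t}u|$ forces $|\cot\Theta(\Hess\,u)|$ to be bounded, and because we are in the hypercritical regime $\Theta(\Hess\,u)\in(0,\pi/2)$, this is equivalent to $\Theta(\Hess\,u)$ staying away from both endpoints. This is the one place where the hypercritical assumption is essential.

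For (c), at any point diagonalize $\Hess\,u$ with eigenvalues $\lambda_{i}$ and set $\theta_{i}=\arccot\,\lambda_{i}\in(0,\pi/2)$, so that $\Theta=\sum_{i}\theta_{i}$ and $1+\lambda_{i}^{2}=\csc^{2}\theta_{i}$. A direct differentiation gives
\[
F^{i\bar{i}}=\frac{\csc^{2}\Theta}{1+\lambda_{i}^{2}}=\csc^{2}\Theta\,\sin^{2}\theta_{i},\qquad \mathcal{F}=\csc^{2}\Theta\sum_{i=1}^{n}\sin^{2}\theta_{i}.
\]
Using $\sin\theta\ge\tfrac{2}{\pi}\theta$ on $[0,\pi/2]$, Cauchy--Schwarz $\sum_{i}\theta_{i}^{2}\ge\Theta^{2}/n$, and $\csc^{2}\Theta\ge\Theta^{-2}$, one obtains the universal bound $\mathcal{F}\ge 4/(\pi^{2}n)$. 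Shrinking $\delta_{1}$ if necessary, all three claims hold simultaneously. I do not anticipate any real obstacle: once one notices that $\partial_{t}u$ satisfies the homogeneous equation $Lv=0$, the rest is elementary one-variable calculus, and notably no $C^{2}$ estimate or invocation of the subsolution $\underline{u}$ is needed at this stage.
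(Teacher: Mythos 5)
Your treatment of (b) matches the paper exactly: differentiate \eqref{flow} in $t$, observe $L(\partial_{t}u)=0$, and apply the parabolic maximum principle to trap $\partial_{t}u$ between its parabolic-boundary extrema. Your treatment of (c) is a genuinely different and arguably cleaner route. The paper disposes of (c) by citing \cite{PT17}; you instead compute $F^{i\bar i}=\csc^{2}\Theta\,\sin^{2}\theta_{i}$ directly and use Jordan's inequality together with $\sum\theta_{i}^{2}\geq\Theta^{2}/n$ and $\csc^{2}\Theta\geq\Theta^{-2}$ to obtain the dimensional bound $\mathcal{F}\geq 4/(\pi^{2}n)$. This is a nice self-contained argument whose constant is independent of the data and even of the pinching in (a); it is a real improvement in transparency, and I verified the algebra.

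However, your derivation of (a) from (b) has a genuine gap. You claim that the bound on $|\partial_{t}u|$ bounds $|\cot\Theta(\Hess\,u)|$, and that on the hypercritical interval this is ``equivalent to $\Theta(\Hess\,u)$ staying away from both endpoints.'' That equivalence is false. On $(0,\tfrac{\pi}{2})$ the cotangent maps onto $(0,\infty)$: it blows up as $\Theta\to 0^{+}$, but it tends to $0^{+}$ as $\Theta\to\tfrac{\pi}{2}^{-}$. Thus $|\cot\Theta(\Hess\,u)|\leq C$ gives the lower pinching $\Theta(\Hess\,u)\geq\arccot C>0$ but says nothing about an upper bound strictly below $\tfrac{\pi}{2}$. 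To keep $\Theta(\Hess\,u)$ away from $\tfrac{\pi}{2}$ you need a \emph{positive lower} bound on $\cot\Theta(\Hess\,u)=\partial_{t}u+\cot\hat\theta$, i.e.\ a bound $\partial_{t}u\geq -\cot\hat\theta+c$ with a uniform $c>0$. The maximum principle gives $\partial_{t}u\geq\inf_{\partial_{p}\mathcal{Q}_{T}}\partial_{t}u$, but turning the right-hand side into a quantity strictly above $-\cot\hat\theta$ (uniformly in $T$) is exactly what the hypotheses \eqref{initial} on $\mathcal{B}$ and, crucially, C2) on $\mathcal{S}$ are for: they force $\cot\Theta(\Hess\,u)\geq\tan\theta_{0}>0$ on all of $\partial_{p}\mathcal{Q}_{T}$, hence globally. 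Your proof never invokes C2) --- you only observe that $\partial_{t}\psi$ is ``bounded by smoothness'' --- so as written it cannot rule out $\Theta(\Hess\,u)$ approaching $\tfrac{\pi}{2}$. This is where the paper's argument does more work than yours, and the missing step is not cosmetic.
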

		
		\begin{proof}
			Differentiating  \eqref{flow} in $t$ variable we get $	L(\partial_{t}u)=0. $
			By maximum principle, we get  $\inf_{\partial_p \mathcal{Q}_T}\partial_{t}u\leq \partial_{t}u\leq \sup_{\partial_p \mathcal{Q}_T}\partial_{t}u$ and hence
			\[\inf_{\partial_p \mathcal{Q}_T}\Theta(\Hess\,\psi)\leq \Theta(\Hess\,u)\leq \sup_{\partial_p \mathcal{Q}_T}\Theta(\Hess\,\psi).\]
			Note that \[\sup_{\partial_p \mathcal{Q}_T}\Theta(\Hess\,\psi)=\max\{\sup_{\partial\Omega}\Theta(\Hess\,\varphi), \sup_{\mathcal{S}}\Theta(\Hess\,\psi)\}\leq  \frac{\pi}{2}-\theta_0\]
			by using \eqref{initial} and C2). Similarly, we  also get $\inf_{\partial_p \mathcal{Q}_T}\Theta(\Hess\,\psi)>0$. These complete the proof of (a) and (b).
			
			We remark that, as in \cite{PT17}, there exists a constant $\delta
			>0 $ depending on $\|\partial_{t}u\|_{C^0}$ such that $\mathcal{F}>\delta$. This proves (c). 
		\end{proof}
		\subsection{$C^{0}$ estimate}
		
		\begin{proposition}\label{C0}
			We have on $\overline{\mathcal{Q}}_{T}$, 
			\begin{equation}\label{c0}
				\underline{u}\leq u\leq \sup_{\mathcal{S}}\underline{u}.
			\end{equation}
			Therefore, there exists a uniform constant $C_{1}$ such that $|u|\leq C_{1}$ on $\overline{\mathcal{Q}}_{T}$.
		\end{proposition}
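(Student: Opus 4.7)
The statement contains two bounds, which I would handle by two separate maximum-principle arguments.

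For the lower bound $u \geq \underline{u}$, I would set $w := u - \underline{u}$ and subtract the subsolution inequality from the flow equation to get
\[
\partial_t w \geq \cot\Theta(\Hess u) - \cot\Theta(\Hess \underline{u}).
\]
Since both $\Hess u$ and $\Hess \underline{u}$ take values in the hypercritical cone (by Proposition~\ref{hyperut}(a) and the standing hypothesis $\Theta(\Hess \underline u)\in(\theta_0,\tfrac{\pi}{2}-\theta_0)$), where $\lambda \mapsto \cot\Theta(\lambda)$ is concave, applying the concavity inequality at $\Hess u$ gives
\[
\partial_t w \geq F^{i\bar{j}}(\Hess u)(u_{i\bar{j}} - \underline{u}_{i\bar{j}}) = F^{i\bar{j}} w_{i\bar{j}},
\]
i.e.\ $L w \geq 0$ on $\mathcal{Q}_T$. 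On $\mathcal{B}$ one has $w = \varphi - \underline{u} \geq 0$ by \eqref{subsolution-1}; on $\mathcal{S}$, $w = \psi - \underline{u} = 0$. I would then invoke the weak parabolic minimum principle (applied, if desired, to $w + \varepsilon t$ and sending $\varepsilon \to 0^+$ to rule out touching at $t = T$) to conclude $w \geq 0$ on $\overline{\mathcal{Q}}_T$.

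For the upper bound, the plan is to exploit hypercriticality to make $u(\cdot, t)$ plurisubharmonic in the spatial variable. By Proposition~\ref{hyperut}(a), $\Theta(\Hess u) < \pi/2$. Since $\arccot$ is strictly decreasing with $\arccot 0 = \pi/2$, the bound $\sum_i \arccot \lambda_i(\Hess u) < \pi/2$ forces every $\lambda_i(\Hess u) > 0$; otherwise a single term would already be $\geq \pi/2$. Hence $\Hess u > 0$ throughout $\overline{\mathcal{Q}}_T$, so each time-slice is strictly plurisubharmonic and in particular subharmonic. The spatial maximum principle then yields, for each $t$,
\[
\sup_\Omega u(\cdot, t) \leq \sup_{\partial \Omega} u(\cdot, t) = \sup_{\partial \Omega} \psi(\cdot, t) \leq \sup_{\mathcal{S}} \psi = \sup_{\mathcal{S}} \underline{u},
\]
giving the claimed upper bound. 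The uniform constant $C_1$ then follows from the boundedness of $\underline{u}$ and $\psi$.

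I expect the main subtlety to be the upper bound, since it is precisely there that the hypercritical hypothesis is indispensable: under supercriticality alone ($\Theta < \pi$) the eigenvalues of $\Hess u$ could be of mixed sign, $u$ need not be subharmonic, and the spatial maximum principle would not apply. This matches the content of the remark following Theorem~\ref{main theorem}.
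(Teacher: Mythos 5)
Your proposal is correct and follows essentially the same route as the paper: the lower bound via a comparison principle and the upper bound by observing that the hypercriticality bound $\Theta(\Hess u)<\pi/2-\delta_1$ from Proposition~\ref{hyperut}(a) forces $\Hess u>0$, hence $u(\cdot,t)$ is plurisubharmonic and attains its spatial maximum on $\partial\Omega$ where $u=\psi=\underline{u}$. The only difference is cosmetic: where the paper simply cites ``the comparison principle'' for $u\ge\underline u$, you spell it out via the concavity of $\lambda\mapsto\cot\Theta(\lambda)$ (linearizing at $\Hess u$ to get $L(u-\underline u)\ge 0$) together with the parabolic minimum principle, which is precisely the standard way that comparison principle is proved.
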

		\begin{proof}
			The first inequality in \eqref{c0} is follows from the comparison principle. If we order that $\lambda_{1}(\Hess\,u)\geq \cdots \geq \lambda_{n}(\Hess\,u)$  in $\mathcal{Q}_{T}$, then by Proposition \eqref{hyperut} we immediately have $\lambda_{n}(\Hess\,u)\geq \tan(\delta_1)$. This implies $u(\cdot,t)$ is plurisubharmonic in $\Omega$. For any $t\in [0,T]$, we see that $u(\cdot,t)$ achieves its maximum on $\partial\Omega$, i.e.
			\[u(\cdot,t)\leq \sup_{\partial\Omega}u(\cdot,t)=\sup_{\partial\Omega}\underline{u}(\cdot,t)\]
			for all $t\in [0,T]$, and then the second inequality follows.
		\end{proof}
		
		\subsection{Gradient estimate}
		Extend $\psi$ to a spatial harmonic function $\overline{u} $ which satisfies  $\Delta\overline{u}(\cdot,t)=0$ in $\Omega$ and $\overline{u}(\cdot,t)=\psi(\cdot,t)$ in $\overline{\Omega}$, for any $t\in [0,T]$. By maximum principle we conclude $\underline{u} \leq u\leq \overline{u}$ on $\overline{\mathcal{Q}}_{T}$  and $\underline{u}=u=\overline{u}$ on $\mathcal{S}$. 
		This also gives us
		\begin{equation}\label{boudary c1}
			|Du|\leq |D\underline{u}|+ |D \overline{u}|\leq C_{2}\quad \mathrm{on} \ \mathcal{S}
		\end{equation}
		for a uniform constant $C_{2}$.

		In the rest of this section, we shall give the interior gradient estimate. 
		\begin{proposition}
			There exists a uniform constant $C_{3}$ such that 
			\begin{equation*}\label{inter c1}
				|Du|\leq C_{3}\quad \mathrm{on} \ \overline{\mathcal{Q}}_{T}.
			\end{equation*}
		\end{proposition}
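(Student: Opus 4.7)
The plan is to apply the parabolic maximum principle directly to the auxiliary function $\beta = |Du|^{2} = \sum_{k} u_{k}u_{\bar{k}}$, exploiting the crucial structural feature of the flow that the nonlinearity $\cot \Theta(\Hess\, u)$ depends only on the complex Hessian of $u$, with no explicit dependence on $u$ or its first derivatives.

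First I would differentiate the flow equation in $z_{k}$ and in $\bar z_{k}$ to obtain the pointwise identities
\[
u_{tk}=F^{i\bar{j}}u_{ki\bar{j}}, \qquad u_{t\bar{k}}=F^{i\bar{j}}u_{\bar{k}i\bar{j}}.
\]
Differentiating $\beta$ twice in $z_{i}\bar{z}_{j}$ and contracting with $F^{i\bar{j}}$, the third-order terms fold into $\partial_{t}\beta$ via the identities above, and a short bookkeeping gives
\[
L\beta \;=\; -\sum_{k} F^{i\bar{j}}\bigl(u_{ki}u_{\bar{k}\bar{j}}+u_{k\bar{j}}u_{\bar{k}i}\bigr).
\]
Since $F^{i\bar{j}}$ is positive definite Hermitian (as recorded at the start of Section 2), each of the two sums on the right is a non-negative Hermitian quadratic form, so $L\beta \leq 0$ on $\mathcal{Q}_{T}$.

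Once $L\beta\leq 0$ is in hand, the parabolic maximum principle (applied, if needed, to the perturbation $\beta-\varepsilon t$ in order to treat the top slice $t=T$ and then letting $\varepsilon\to 0$) forces $\sup_{\overline{\mathcal{Q}}_{T}}\beta=\sup_{\partial_{p}\mathcal{Q}_{T}}\beta$. On the side $\mathcal{S}$, the boundary estimate \eqref{boudary c1} already yields $|Du|\leq C_{2}$, while on the bottom $\mathcal{B}$ the bound $|Du|\leq \|\varphi\|_{C^{1}(\overline{\Omega})}$ is immediate from $u(\cdot,0)=\varphi$. Taking $C_{3}$ to be the larger of these two constants completes the estimate.

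There is no genuine obstacle in this argument; its effectiveness rests entirely on the absence of explicit $u$- and $Du$-dependence on the right-hand side of \eqref{flow}, which prevents bad positive terms from appearing in $L\beta$. Had such terms been present, one would have to introduce an auxiliary factor of the form $e^{\phi(u)}$ and exploit the concavity of $\cot\Theta$ to absorb them, but here even this standard trick is unnecessary.
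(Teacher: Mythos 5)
Your proof is correct, and it takes a modestly different route from the paper's.

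You apply the weak parabolic maximum principle to $\beta=|Du|^{2}$ after showing that $L\beta=-\sum_{k}F^{i\bar{j}}(u_{ki}u_{\bar{k}\bar{j}}+u_{k\bar{j}}u_{\bar{k}i})\leq 0$, each sum being a non-negative Hermitian form in $k$ since $(F^{i\bar{j}})$ is positive definite. The paper instead works, for each $k$, with the quantity $v=\pm u_{x_{k}}+|z|^{2}$: differentiating the flow gives $L(u_{x_{k}})=0$, so $Lv=-\mathcal{F}\leq -\delta_{1}<0$ by Proposition~2.5(c), and the strict inequality immediately rules out an interior maximum. The trade-off is that the paper's device makes the maximum-principle step cheap (a strict subsolution gives an outright contradiction) at the cost of invoking the lower bound $\mathcal{F}>\delta_{1}$ on the trace of the linearization, while your choice of $\beta$ is self-contained (no lower bound on $\mathcal{F}$ is needed) at the cost of one extra line of bookkeeping for the positivity of the two quadratic forms. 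Both rest on the same structural fact that the right-hand side of \eqref{flow} has no explicit $u$- or $Du$-dependence. A minor stylistic gain of your version is that you treat the bottom $\mathcal{B}$ explicitly via $u(\cdot,0)=\varphi$, whereas the paper mentions only $\mathcal{S}$; a minor loss is that to recover the bound on the full real gradient one should note that $|Du|^{2}$ (real) and $\sum_{k}u_{k}u_{\bar{k}}$ differ only by a universal factor, which is harmless but worth a word.
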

		\begin{proof}
			For any	$k=1,\cdots n$, let us consider the quantity 
			$$v= \pm u_{x_{k}}+|z|^{2}\quad \mathrm{on} \ \overline{\mathcal{Q}}_{T}.$$
			Suppose that $v$ attains its maximum at an interior point $p_{0}=(z_{0},t_{0})$ of $\overline{\mathcal{Q}}_{T}$, otherwise we are done.
			By an orthogonal transformation, we may further assume  the complex Hessian $(u_{i\bar{j}})$ is diagonal at $p_{0}$. It follows that $F^{i\bar j}$ is also diagonal at $p_{0}$ and 
			$$F^{i\bar{j}}=\frac{1+\cot^{2}
				\Theta(\Hess\,u)}{1+\lambda^{2}_{j}(\Hess\,u)}\delta_{ij}.$$
			It follows from Proposition \ref{hyperut} and the maximal principle that at $p_{0}$,
			\begin{equation*}
				\begin{split}
					Lv=&v_{t}-F^{i\bar{j}}v_{i\bar{j}}=\pm \big((\partial_{t}u)_{x_{k}}-F^{i\bar{j}}u_{i\bar{j}x_{k}}\big)-\mathcal{F}\leq -\delta_1<0,
				\end{split}
			\end{equation*}
			which is absurd. This implies $v$ must achieve its maximum on $\mathcal{S}$. Together with \eqref{boudary c1}, we obtain the global gradient estimate.
		\end{proof}

		\subsection{Interior second order estimate}
		\begin{proposition}\label{Interior second order estimate}
			There exists a uniform constant $C_{4}$ such that 
			\begin{equation}\label{inter c2}
				|D^{2}u|\leq C_{4}\Big(1+\max_{\mathcal{S}} |D^{2}u|\Big)\quad \mathrm{on} \ \overline{\mathcal{Q}}_{T}.
			\end{equation}
		\end{proposition}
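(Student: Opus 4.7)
The plan is to adapt the standard Hou--Ma--Wu/Sz\'ekelyhidi maximum principle argument to our parabolic dHYM flow, with Proposition \ref{key} serving as the replacement for the usual subsolution trick. Consider the test function
\[
W(x,t) = \log \lambda_1(\Hess u) + \phi(|Du|^2) + A(u - \underline{u}),
\]
where $\lambda_1$ is the largest eigenvalue of the complex Hessian, $\phi(s) = -\frac{1}{2}\log(2K - s)$ with $K$ chosen greater than $\sup_{\overline{\mathcal{Q}}_T}|Du|^2$ (finite by the $C^1$ estimate), and $A \gg 1$ is to be fixed. Since the claimed bound holds trivially on $\mathcal{B}$ from the initial data, and on $\mathcal{S}$ it coincides with the right-hand side of \eqref{inter c2}, it suffices to rule out an interior maximum of $W$ at a point where $\lambda_1$ is arbitrarily large.

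Suppose $W$ achieves its maximum at $(x_0, t_0) \in \mathcal{Q}_T$ with $|\lambda(\Hess u)| \geq R_0$. Rotate complex coordinates so that $(u_{i\bar j})$ is diagonal at $(x_0, t_0)$ with $\lambda_1 \geq \cdots \geq \lambda_n$; by Proposition \ref{hyperut}, $F^{i\bar j}$ is then also diagonal with $F^{i\bar i} = (1+\cot^2 \Theta)/(1+\lambda_i^2)$. At this point, $\partial_i W = 0$ and $LW \geq 0$. Differentiating \eqref{flow} twice in the $\partial_1 \partial_{\bar 1}$ direction, invoking the concavity of $\lambda \mapsto \cot \Theta(\lambda)$ on the hypercritical cone, and using the standard formula for the second derivative of a simple eigenvalue yield
\[
L(\log \lambda_1) \leq \frac{F^{i\bar i}|u_{1\bar 1, i}|^2}{\lambda_1^2} - \sum_{k > 1}\frac{F^{i\bar i}\bigl(|u_{1\bar k, i}|^2 + |u_{k\bar 1, i}|^2\bigr)}{\lambda_1(\lambda_1 - \lambda_k)}.
\]
A parallel computation furnishes $L(\phi(|Du|^2))$ with a good term $-\phi' F^{i\bar i}(|u_{ki}|^2 + |u_{k\bar i}|^2)$ plus a further $-\phi'' F^{i\bar i}\bigl|\partial_i(|Du|^2)\bigr|^2$, both of definite sign, while $L(u - \underline u)$ is controlled by Proposition \ref{key}.

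The ``bad'' contribution $F^{i\bar i}|u_{1\bar 1,i}|^2/\lambda_1^2$ in $L(\log \lambda_1)$ is absorbed through the critical-point identity $\partial_i \log \lambda_1 = -\phi'\,\partial_i|Du|^2 - A\,\partial_i(u - \underline u)$, which by Cauchy--Schwarz allows the $\phi''$-term to swallow it up to a residual of size $A^2 \mathcal{F}$. Applying Proposition \ref{key} then leaves two scenarios. If $L(u-\underline u)\geq \kappa\mathcal{F}$, a suitable choice of $A$, large in terms of $\kappa$ and $\delta_1$, forces $LW > 0$, the desired contradiction. Otherwise $F^{k\bar k} \geq \kappa \mathcal{F}$ for every $k$, and one follows Sz\'ekelyhidi's strategy by splitting indices into $I = \{k : \lambda_k < \varepsilon \lambda_1\}$ and $J = \{k : \lambda_k \geq \varepsilon \lambda_1\}$: on $I$, the gap $\lambda_1 - \lambda_k$ is comparable to $\lambda_1$ so the eigenvalue-gap correction handles the bad term directly; on $J$, the uniform lower bound $F^{k\bar k} \geq \kappa \mathcal{F}$ together with the $\phi''$-term completes the absorption. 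The main obstacle lies in calibrating the constants $A$, $\varepsilon$, and $K$ so that both cases close simultaneously; once $\lambda_1 \leq C(1 + \max_{\mathcal{S}}|D^2u|)$ is established, the mixed derivatives $u_{i\bar j}$ are controlled, and the holomorphic components $u_{ij}$, hence $|D^2 u|$ itself, follow from differentiating \eqref{flow} once and using the hypercritical ellipticity with the $C^1$ bound.
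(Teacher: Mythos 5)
Your proposal is mathematically plausible but follows a genuinely different and substantially heavier route than the paper. The paper's interior estimate is a one-line maximum principle applied to the elementary barrier $w=\Delta u+|z|^{2}$: since $\Omega\subset\mathbb{C}^{n}$ is flat, differentiating the equation twice in any real direction $\xi$ and summing gives $\Delta(\partial_{t}u)=F^{i\bar j}(\Delta u)_{i\bar j}+\sum_{\xi}F^{i\bar j,p\bar q}u_{\xi i\bar j}u_{\xi p\bar q}\leq F^{i\bar j}(\Delta u)_{i\bar j}$ by concavity of $\cot\Theta$, so $L(\Delta u)\leq 0$ and hence $Lw\leq-\mathcal{F}\leq-\delta_{1}<0$, forcing the maximum of $w$ onto $\partial_{p}\mathcal{Q}_{T}$ with no recourse to the subsolution at all. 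The Hou--Ma--Wu/Sz\'ekelyhidi test function $\log\lambda_{1}+\phi(|Du|^{2})+A(u-\underline u)$ you propose, together with the $I$/$J$ index split, Proposition \ref{key}, and the calibration of $A,\varepsilon,K$, is the right machine for compact K\"ahler (or Hermitian) manifolds, where $L(\Delta u)$ picks up curvature commutator terms that destroy the elementary barrier; but in a Euclidean domain those terms vanish, and your approach trades a two-line argument for a page of delicate absorption estimates. Two smaller cautions if you do pursue your route: $\lambda_{1}$ is only Lipschitz at eigenvalue crossings, so you need the usual perturbation or viscosity-sense device before computing $L(\log\lambda_{1})$; and your closing sentence on passing from the complex Hessian bound to $|D^{2}u|$ ("differentiating once and using ellipticity") is vague---the clean statement, used implicitly by the paper as well, is that each pure second derivative $u_{\xi\xi}$ satisfies $L(u_{\xi\xi})\leq 0$ by the same concavity, so is bounded above on $\partial_{p}\mathcal{Q}_{T}$, while plurisubharmonicity ($u_{\xi\xi}+u_{J\xi J\xi}\geq 0$) supplies the lower bound.
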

		\begin{proof}
			Similar to the gradient estimate, let us consider a quantity 
			$$w= \Delta u+|z|^{2}\quad \mathrm{on} \ \overline{\mathcal{Q}}_{T}.$$

			\begin{claim}\label{maximun w}
				$w$ cannot attain its maximum at an interior point of $\overline{\mathcal{Q}}_{T}$.
			\end{claim}

			Given this, as well as $u(\cdot,t)$ is plurisubharmonic in $\overline{\Omega}$, the inequality \eqref{inter c2} follows.

			\begin{proof}[Proof of Claim \ref{maximun w}]
				We fix an arbitrary interior point $q_{0}=(z_{0},t_{0})$ of $\overline{\mathcal{Q}}_{T}$.
				By an orthogonal transformation, we may further assume  the complex Hessian $(u_{i\bar{j}})$ is diagonal at $q_{0}$. Using the concavity of $F$,
				\begin{equation*}
					\begin{split}
						Lw=&\partial_{t}w-F^{i\bar{j}}w_{i\bar{j}}= \Delta (\partial_{t}u)-F^{i\bar{j}}(\Delta u)_{i\bar{j}}-\mathcal{F}
						\leq -\delta_{1}<0,
					\end{split}
				\end{equation*}
				which is impossible. By arbitrariness of $q_{0}$, the claim follows.
			\end{proof}
			Consequently, we have finished the proof of Proposition \ref{Interior second order estimate}.		
		\end{proof}
		\subsection{Boundary second order estimates}
		In this subsection we shall prove 
		\begin{equation*}\label{boundary c2}
			\sup_{\mathcal{S}} |D^{2}u|\leq C_{5}
		\end{equation*}
		for a uniform constant $C_{5}.$ We need the following lemma.

		\begin{lemma}\label{key corollary}

			Let $\delta_{0}, R_0>0$ be uniform constants as in Definition \ref{parabolic sub}  and $\underline{u}$ be a subsolution of \eqref{dp}. There exists a constant $\delta_{3}=\delta_{3}(\delta_0,R_0,\psi,\underline{u},\theta_0, \hat{\theta})$ such that  if $|\lambda(\Hess\,u)|\geq R_0$, one of following statements holds: 
			\begin{align}\label{key inequality}
				&\mathrm{either}	\, & &L(u-\underline{u})\geq \delta_{3}\,,\\
				\label{good case}
				&\mathrm{or} \,  & & \frac{1}{\delta_{3}}\geq \lambda_{1}(\Hess\,u)\geq  \cdots \geq \lambda_{n}(\Hess\,u)\geq \tan (\delta_1)\, \  \mathrm{on}\ \overline{\mathcal{Q}}_T.
			\end{align}
			Here we have ordered $\lambda_{1}(\Hess\,u)\geq \cdots \geq \lambda_{n}(\Hess\,u)$.
		\end{lemma}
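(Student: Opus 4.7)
The plan is to feed the output of Proposition \ref{key} into the eigenvalue bounds already secured by Proposition \ref{hyperut}, so the lemma becomes essentially a repackaging. Fix $(x,t)\in\overline{\mathcal{Q}}_T$ with $|\lambda(\Hess\,u)(x,t)|\geq R_0$. Proposition \ref{key} gives a constant $\kappa=\kappa(\delta_0,R_0)>0$ such that at this point at least one of the following holds:
\begin{equation*}
L(u-\underline{u})\geq \kappa\mathcal{F}, \qquad\text{or}\qquad F^{k\bar{k}}(\Hess\,u)\geq \kappa\mathcal{F}\ \text{for every }k.
\end{equation*}
In the first alternative, Proposition \ref{hyperut}(c) provides $\mathcal{F}>\delta_1$, so $L(u-\underline u)\geq \kappa\delta_1$, which is \eqref{key inequality} for any $\delta_3 \leq \kappa \delta_1$.

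So the only task is to derive \eqref{good case} from the second alternative. Choose the unitary frame that diagonalizes $\Hess\,u$ at $(x,t)$ and order $\lambda_1(\Hess\,u)\geq \cdots\geq \lambda_n(\Hess\,u)$. By the formula recorded in the gradient estimate, $F^{i\bar i}=\tfrac{1+\cot^2\Theta(\Hess\,u)}{1+\lambda_i^{2}(\Hess\,u)}$, so specializing the second alternative to $k=1$ and invoking $\mathcal F>\delta_1$ yields
\begin{equation*}
\frac{1+\cot^{2}\Theta(\Hess\,u)}{1+\lambda_{1}^{2}(\Hess\,u)}=F^{1\bar 1}\geq \kappa\mathcal{F}\geq \kappa\delta_1.
\end{equation*}
Proposition \ref{hyperut}(a) gives $\Theta(\Hess\,u)>\delta_1$, hence $\cot\Theta(\Hess\,u)\leq \cot\delta_1$ and the displayed inequality produces a uniform upper bound on $\lambda_1(\Hess\,u)$ in terms of $\kappa$ and $\delta_1$. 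The matching lower bound $\lambda_n(\Hess\,u)\geq \tan\delta_1$ holds on all of $\overline{\mathcal{Q}}_T$: the bound $\Theta(\Hess\,u)<\tfrac{\pi}{2}-\delta_1$ from Proposition \ref{hyperut}(a) forces every $\arccot \lambda_i(\Hess\,u)<\tfrac{\pi}{2}-\delta_1$, i.e.\ $\lambda_i(\Hess\,u)>\tan\delta_1$. Taking $\delta_3>0$ small enough to simultaneously satisfy $\delta_3\leq \kappa\delta_1$ and $\tfrac{1}{\delta_3}\geq \sqrt{(1+\cot^2\delta_1)/(\kappa\delta_1)-1}$ covers both alternatives.

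The argument is essentially mechanical once Proposition \ref{key} and Proposition \ref{hyperut} are in hand; the only mild point requiring care is that the hypercritical assumption is what makes all $\lambda_i$ strictly positive, which legitimizes using the eigenvalue formula for $F^{i\bar i}$ and ensures that the upper bound on $\lambda_1$ really controls $|\lambda(\Hess\,u)|$. No geometric input beyond the two cited propositions is needed.
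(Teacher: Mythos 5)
Your proof is correct and follows the paper's route: split on the two alternatives of Proposition \ref{key}, then close case (a) with the uniform bound $\mathcal{F}>\delta_1$ from Proposition \ref{hyperut}(c) and close case (b) by extracting an upper bound on $\lambda_1(\Hess\,u)$ from the formula for $F^{1\bar 1}$. The only small implementation difference is in case (b) --- the paper compares $F^{1\bar 1}\geq \kappa F^{n\bar n}$ and then uses a two-sided bound on $\lambda_n$ to control $\lambda_1$, whereas you bound $F^{1\bar 1}\geq \kappa\mathcal{F}>\kappa\delta_1$ directly and pair it with $\cot\Theta(\Hess\,u)\leq \cot\delta_1$; that is a slight streamlining of the same mechanism, not a genuinely different argument.
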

		\begin{proof}
			Using Lemma \ref{key}, there exists a constant $\kappa=\kappa(\delta_0,R_0)$ such that  one of the following inequalities holds:
			\begin{itemize}
				\item[(a)] $L(u-\underline{u})\geq \kappa\mathcal{F}$.
				\vspace{1mm}
				\item[(b)] $F^{k\bar{k}}(\Hess\,u)\geq \kappa\mathcal{F}$ for any $k=1,2,\cdots,n$.
			\end{itemize}
			
			If (a) holds, we immediately obtain \eqref{key inequality}. 
			Now we assume (b) holds. Then $F^{1\bar{1}}\geq \kappa F^{n\bar n}$ and hence $\lambda_n(\Hess\,u)\geq \sqrt{\kappa(\lambda^{2}_1(\Hess\,u)-1)}$. It follows from
			Propsition \ref{hyperut}(a)  that $\tan(\delta_1)\leq \lambda_n(\Hess\,u)\leq \cot(\frac{\delta_1}{n})$.
			These give  a uniform upper bound of $\lambda_1(\Hess\,u)$.
			
		\end{proof}
		
		\begin{remark}
			We remind that \eqref{good case} gives us the global second order estimates. In the sequel,  we  always assume \eqref{key inequality} holds. 
		\end{remark}

		Let us fix a point $p_{0}=(z,t_{0})\in \mathcal{S}$. Choose coordinates $z_1,z_2,\cdots, z_n$ with $z_i=x_i+\sqrt{-1}y_i$ for $\Omega$. After a translation we may assume $z=0$ and  the positive $x_{n}$ axis is the direction of interior normal of $\partial\Omega$ at 0. We denote $$s_{2k-1}=x_{k},\, s_{2k}=y_{k},\,
		1\leq k\leq n-1;\, s_{2n-1}=y_{n},\,s_{2n}=x_{n}$$ 
		and $s'=(s_{1},\cdots,s_{2n-1})$. Near the origin, we may 
		assume $\partial\Omega$ is represented as a graph
		\[x_{n}=\rho(s')=\frac{1}{2}\sum_{\alpha,\beta=1}^{2n-1}\rho_{\alpha\beta}(0)s_{\alpha}s_{\beta}+O(|s'|^{3}).\]
		\subsubsection{Pure tangential estimate}\label{pure}
		Since $(u-\underline{u})(s',\rho(s'),t)=0$, one derives
		\begin{equation}\label{osca}
			(u-\underline{u})_{s_{\alpha}s_{\beta}}(p_{0})=-(u-\underline{u})_{x_n}(p_{0})\rho_{\alpha\beta}(0), \qquad \alpha,\beta<2n.
		\end{equation}
		This gives the desired estimate.

		\subsubsection{Mixed direction estimate}  Next we proceed to prove 
		\begin{equation}\label{Mixed direction estimate}
			|u_{x_{n}s_{\alpha}}|(p_{0})\leq C_{5},\qquad \alpha<2n. 
		\end{equation}
		For a  small constant $\epsilon>0$ and set
		$ \mathcal{Q}_{\epsilon,p_{0}}=(\Omega\cap B_{\epsilon}(0))\times (0,t_{0}).$
		Let $d(z)=\text{dist}(z,\partial\Omega)$ be distance function from $\partial\Omega$.
		The following lemma plays a key role in our proof.
		\begin{lemma}\label{key lemma}
			There exist some uniform positive constants $s$ and $\epsilon$   small and $N$ large such that the function
			\begin{equation*}
				v=(u-\underline{u})+sd-Nd^{2}
			\end{equation*}
			satisfies
			\begin{equation}\label{Lv}
				\begin{cases}
					v\geq \frac{s}{2}d \quad &\mathrm{on} \, (B_{\epsilon}(0)\cap\Omega)\times \{0\}.\\[1mm]        v\geq 0 \quad &\mathrm{on} \, \partial(\Omega\cap B_{\delta}(0))\times (0,t_{0}].\\[1mm]
					L(v)\geq\epsilon_0 \quad &\mathrm{in} \, \mathcal{Q}_{\epsilon,p_{0}},
				\end{cases}
			\end{equation}
			where $\epsilon_0>0$ is a constant depending on $\varphi,\,\underline{u},\, \theta_0,\,\hat{\theta}$.
		\end{lemma}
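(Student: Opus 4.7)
The plan is to verify the three conditions on $v$ in turn. The first two will follow from the global bound $u \geq \underline{u}$ of Proposition \ref{C0} combined with the elementary estimate $sd - Nd^{2} \geq \tfrac{s}{2}d$ valid whenever $d \leq s/(2N)$, while the third---the substantive step---will be obtained by a case split built on Lemma \ref{key corollary}.

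For the bottom condition, at $t=0$ one has $u - \underline{u} \geq 0$, and choosing $\epsilon \leq s/(2N)$ already gives $sd-Nd^{2} \geq \tfrac{s}{2}d$ on $B_{\epsilon}(0)\cap\Omega$. For the side condition, the piece lying on $\partial\Omega$ has $u = \underline{u} = \psi$ and $d = 0$, so $v = 0$; on $\partial B_{\epsilon}(0) \cap \overline{\Omega}$ one has $u - \underline{u} \geq 0$ together with $sd - Nd^{2} \geq 0$, again since $d \leq \epsilon \leq s/N$. This settles the first two lines of \eqref{Lv}.

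For the third line, I would compute directly, using $\partial_{t}d = 0$, to obtain
\[
Lv = L(u-\underline{u}) - s\, F^{i\bar{j}} d_{i\bar{j}} + 2N\, F^{i\bar{j}} d_{i} d_{\bar{j}} + 2Nd\, F^{i\bar{j}} d_{i\bar{j}}.
\]
Proposition \ref{hyperut} gives $\Theta(\Hess u)\in[\delta_{1},\tfrac{\pi}{2}-\delta_{1}]$ and hence $\lambda_{i}(\Hess u) \geq \tan\delta_{1}>0$ for every $i$, which in turn forces $\mathcal{F}$ to be bounded above uniformly; in particular $|F^{i\bar{j}} d_{i\bar{j}}| \leq C\mathcal{F} \leq C'$. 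I then split into the two regimes supplied by Lemma \ref{key corollary}. In the first regime, where $|\lambda(\Hess u)| \geq R_{0}$ and \eqref{key inequality} holds, $L(u-\underline{u}) \geq \delta_{3}$, and discarding the nonnegative $F^{i\bar{j}} d_{i} d_{\bar{j}}$-term yields $Lv \geq \delta_{3} - C's - 2NC'\epsilon$. In the second regime, where either $|\lambda(\Hess u)| < R_{0}$ or \eqref{good case} holds, all eigenvalues of $\Hess u$ are uniformly bounded, which forces the smallest eigenvalue of $(F^{i\bar{j}})$ to be bounded below by some $c_{0} > 0$; combined with $|Dd| = 1$ near $\partial\Omega$ this gives $F^{i\bar{j}} d_{i} d_{\bar{j}} \geq c_{0}/4$, while $L(u-\underline{u})$ is itself uniformly bounded from below, say by $-C''$, so that $Lv \geq Nc_{0}/2 - C'' - C's - 2NC'\epsilon$.

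The main obstacle will be arranging the three parameters consistently across both regimes: $N$ is dictated by the second regime and must be large enough that $Nc_{0}/2$ dominates $C''$; $\epsilon$ is then taken small (depending on $N$) so as to absorb the $2NC'\epsilon$ term in each regime while also respecting $\epsilon \leq s/(2N)$ demanded by the first two conditions; finally $s$ is chosen small enough to knock out the remaining $C's$ contribution. A subsidiary but essential input is the uniform control of $\mathcal{F}$ and the positivity $\lambda_{i} \geq \tan\delta_{1}$ from Proposition \ref{hyperut}, both resting ultimately on the hypercritical assumption on $\hat{\theta}$.
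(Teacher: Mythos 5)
Your argument is correct and is essentially the proof the paper gives: the first two lines of \eqref{Lv} come from $u\geq\underline{u}$ (Proposition \ref{C0}) together with $\epsilon\ll s/N$, and the third line from the same small/large Hessian dichotomy, with Lemma \ref{key corollary} supplying $L(u-\underline{u})\geq\delta_{3}$ in the large case and a uniform positive lower bound on $F^{i\bar{j}}d_{i}d_{\bar{j}}$ making the $N$-term dominate in the small case. The one quibble is that your stated parameter order ($N$, then $\epsilon$, then $s$) is circular in view of the constraint $\epsilon\leq s/(2N)$; the clean order, as in the paper, is $N$ large, then $s$ small, then $\epsilon$ small depending on both.
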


		\begin{proof}
			Since $\underline{u}\leq u$ by \eqref{c0}, one derives $v\geq 0$ on $\partial(\Omega\cap B_{\epsilon}(0))\times (0,t_{0})$   if we further require $\epsilon\ll s\ll 1$ such that $N\epsilon<\frac{s}{2}$. In this case we also have the first inequality in \eqref{Lv}. Now we divide the proof of last inequality in \eqref{Lv} into two possibilities:
			\begin{itemize}
				\item[(a)] Assume $\sum_{i=1}^{n}u_{i\bar{i}}\leq R_0$ for the constant $R_0$  in Definition \ref{parabolic sub}. Since $|\partial d|=\frac{1}{2}$ and $F^{i\bar{i}}\geq \frac{1}{C(1+R_0^{2})}$,\begin{equation*}\label{ld}
					F^{i\bar{j}}d_{i}d_{\bar{j}}\geq \frac{1}{2C(1+R_0^{2})}, \qquad
					|F^{i\bar{j}}d_{i\bar{j}}|\leq C.
				\end{equation*}
				Moreover, by definition of $F^{i\bar j}$,
				$
				F^{i\bar{j}}(\underline{u}_{i\bar{j}}-u_{i\bar{j}})\leq C.
				$
				It follows from the above inequalities that
				\begin{equation*}
					\begin{split}
						L(v)&=	 \partial_{t}u-F^{i\bar{j}}(u_{i\bar{j}}-\underline{u}_{i\bar{j}})-(s-Nd)	F^{i\bar{j}}d_{i\bar{j}}+NF^{i\bar{j}}d_{i}d_{\bar{j}}\\
						&\geq  \frac{N}{2C(1+R_0^{2})}-C-(s-N\epsilon)C\geq\epsilon_0
					\end{split}
				\end{equation*}
				for a uniform constant $\epsilon_0>0$			when $N$ is large enough.
				\vspace{1mm}
				\item[(b)]
				Suppose now that $\sum_{i=1}^{n}u_{i\bar{i}}\geq R_0$, where $R_0$ is the constant as in Definition \ref{parabolic sub}. It follows from \eqref{key corollary} that
				\begin{equation*}
					\begin{split}
						L(u-\underline{u})\geq& \delta_{3}.
					\end{split}
				\end{equation*}
				In this case, we also have $|F^{i\bar{j}}d_{i\bar{j}}|\leq C$  and the term $F^{i\bar{j}}d_{i}d_{\bar{j}}$ is non-negative. By a direct calculation,
				\begin{equation*}
					\begin{split}
						L(sd-Nd^{2})=&-(s-Nd)F^{i\bar{j}}d_{i\bar{j}}+NF^{i\bar{j}}d_{i}d_{\bar{j}}
						\geq -C(s+N\epsilon).
					\end{split}
				\end{equation*}
				Thus, $Lv\geq \delta_{3}-C(s+N\epsilon)\geq\epsilon_0$ as $s,\epsilon$ are small enough.
			\end{itemize}
			
		\end{proof}

		For each integer $\alpha$ lies in $[1,2n-1]$, we define a tangential vector field
		\[T_{\alpha}=\partial_{s_{\alpha}}+\sum_{\beta=1}^{2n-1}\rho_{\alpha\beta}(0)\big(s_{\beta}\partial_{x_{n}}-x_{n}\partial_{s_{\beta}}\big).\]
		By a similar argument in \cite[p 89]{CPW17}, we conclude that
		\begin{equation}\label{similar argument}
			\begin{cases}
				\lim\sup_{(s',x_{n},t)\rightarrow (s',\rho(s'),t)}|T_{\alpha}(u-\underline{u})|&\leq C|s'|^{2}. \\[1mm]  
				|T_{\alpha}(u-\underline{u})|\leq C &\mathrm{on}\, (\partial B_\epsilon\cap \Omega)\times [0,t_0] .\\[1mm]
				|T_{\alpha}(u-\underline{u})|\leq Cd &\mathrm{on}\, (B_\epsilon\cap \Omega)\times \{0\} .\\[1mm]
				|LT_{\alpha}(u-\underline{u})|\leq C  &\mathrm{in} \,\mathcal{Q}_{\epsilon,p_{0}}. \\[1mm]
			\end{cases}
		\end{equation}
		Here the constant $C$ is universal.

		Let us employ the barrier function
		\[w=B_2v+B_1|z|^{2}\pm T_{\alpha}(u-\underline{u})\]
		with $B_1, B_2$ are positive constants.
		It follows from \eqref{Lv}-\eqref{similar argument} that when $B_2\gg B_1\geq C\epsilon^{-2}$,
		\begin{equation}\label{max principle for w}
			Lw> 0\quad \text{in} \, \mathcal{Q}_{\epsilon,p_{0}},\qquad
			\underset{x\rightarrow\partial_{p}\mathcal{Q}_{\epsilon,p_{0}}}{\lim\inf}w(x) \geq 0.
		\end{equation}
		Therefore, $w_{x_{n}}(p_{0})\leq 0$ and this gives the desired estimate.		
		\subsubsection{Pure normal estimate}		
		In this subsection, we shall prove 
		\begin{equation}\label{Pure normal estimate}
			|u_{x_{n}x_{n}}|\leq C_{5},\qquad \mathrm{on} \,\, \mathcal{S}.
		\end{equation}

		Before we present the proof of \eqref{Pure normal estimate}  it is necessary to use some well-known results from matrix theory.		\footnote{Hereafter, we let $\alpha,\beta=1,2,\cdots, n-1;$
			$i,j=1,2,\cdots,n.$	
		}For an Hermitian matrix $A=(a_{i\bar{j}})$  with eigenvalues $\lambda_{i}(A)$, we set $\tilde{A}=(a_{\alpha\bar{\beta}})$ and denote these eigenvalues of $\tilde{A}$  by $\lambda'_{\alpha}(\tilde{A})$. It follows from Cauchy-interlace inequality (see, e.g. \cite{Hwa04}) and \cite{CNS85} that
		\begin{itemize}
			\item $\lambda_{j}(A)\leq  \lambda'_{j}(\tilde{A}) \leq \lambda_{j+1}(A)$ for $j\leq n-1$.
			\vspace{0.1cm}
			\item $\lambda_{j}(A)=\lambda'_{j}(\tilde{A})+o(1)$ for $j\leq n-1$  as  $|a_{n\bar{n}}|\ri \infty$.
			\vspace{0.1cm}
			\item 
			$a_{n\bar{n}}\leq 	\lambda_{n}(A)\leq  a_{n\bar{n}}\Big(1+O\big(\frac{1}{a_{n\bar{n}}}\big)\Big)$ as  $|a_{n\bar{n}}|\ri \infty$.
		\end{itemize}

		Let $U=(u_{i\bar{j}})$ and $\underline{U}=(\underline{u}_{i\bar{j}})$ be Hessian matrices of $u$ and $\underline{u}$ respectively. It suffices to show that there are positive uniform constants $c_{0}, D_{0}$ such that for all $D\geq D_{0}$, we have
		$(\lambda'(\tilde{U}),D)\in \Gamma$ as well as
		\begin{equation*}\label{suffices}
			m_{D}=\underset{\mathcal{S}}{\min}\big\{\cot\Theta(\lambda'(\tilde{U}),D)-\cot\Theta(\lambda(U))\big\}\geq c_{0}.
		\end{equation*}
			
		To this end, we shall follow an idea of Trudinger \cite{Tru95}. 
		We remark that $m_D$ is nondecreasing with respect to $D$. Now we prove that
		\begin{equation}\label{reduced goal}
			\tilde{m}=\underset{D\rightarrow \infty}{\lim}m_{D}\geq c_{0}.
		\end{equation}

			In fact, if \eqref{hyperut} holds, there exist a constant $D_1$ such that 
	\begin{equation}
	\begin{split}
	\cot\theta\leq \cot\Theta(\lambda'(\tilde{U}),D_1)- \frac{c_0}{2}\\
		\end{split}
		\end{equation}
		If we set $\cot x= \cot\Theta(\lambda'(\tilde{U}),D_1)- \frac{c_0}{2}$, using mean value theorem, we have
		\[x\geq \Theta(\lambda'(\tilde{U}),D_1)+\frac{c_0}{2}\sin^2\Theta(\lambda'(\tilde{U}),D_1) ,\]
		which implies
			\[\Theta(\lambda(U)) \geq \Theta(\lambda'(\tilde{U}),D_1)+\frac{c_0}{2}\sin^2\Theta(\lambda'(\tilde{U}),D_1) .\]
			Therefore
			\begin{equation*}
			    \arccot\lambda_n\geq \arccot D_1+\arccot\Theta(\lambda'(\tilde{U}))-\sum_{i=1}^{n-1}\arccot\lambda_i+\frac{c_0}{2}\sin^2\Theta(\lambda'(\tilde{U}),D_1)
			\end{equation*}
			Here $\Theta(\lambda(U))$ means that $\Theta(\lambda(U))=\sum_{i=1}^{n-1}\arccot \lambda'_i$.
	Note that 	$\arccot D_1\leq \frac{1}{D_1}$	and $\arccot\Theta(\lambda'(\tilde{U}))-\sum_{i=1}^{n-1}\arccot\lambda_i=o(1)$. Then when $D_i$ is large enough, we have
	\begin{equation}
	   \frac{\pi}{2} \geq   \arccot\lambda_n\geq \frac{1}{C}.
	\end{equation}
	It follows $|u_{x_nx_n}|\leq C$.

		Suppose that $\tilde{m}<\infty$ (otherwise we are done) and  $\tilde{m}$ is achieved at a point $p_{0}=(0,t_{0})\in\mathcal{S}$. Choose a local coordinates $e_{1},\cdots,e_{n}$ with $\mathrm{Re}\,e_{n}=\partial_{x_{n}}$ is the 
		interior normal of $\partial\Omega$. 
		Let us denote
		$$\Gamma_{\infty}=\{(\lambda_{1},\cdots,\lambda_{n-1}) \mid \lambda_{\alpha}>0, \ 1\leq \alpha\leq n-1\}$$
		be a positive orthant in $\mathbb{R}^{n-1}$. We define 
		\[\widetilde{F}[E]=\lim_{D\rightarrow\infty}\cot\Theta(\lambda'(E),D),\qquad
		\widetilde{F}_0^{\alpha\bar{\beta}}=\frac{\partial 
			\widetilde{F}}{\partial E_{\alpha\bar{\beta}}}[\tilde{U}(p_0)]
		\]
		on the set of $(n-1)\times (n-1)$ Hermitian matrices with $\lambda'(E)\in \Gamma_{\infty}$. Note that $\widetilde{F}$ is concave since the operator $\lambda\mapsto\cot\Theta(\lambda)$ is concave.
		Hence,  
		\begin{equation}\label{concave}
			\widetilde{F}_0^{\alpha\bar{\beta}}\big(E_{\alpha\bar{\beta}}-\tilde{U}_{\alpha\bar{\beta}}(p_0)\big)\geq \widetilde{F}[E]-\widetilde{F}[\tilde{U}(p_0)].
		\end{equation}
		Similar to \eqref{osca}, one derives  $\tilde{U}_{\alpha\bar{\beta}}(p_0)-\tilde{\underline{U}}_{\alpha\bar{\beta}}(p_0)=-(u-\underline{u})_{x_{n}}(p_{0})\rho_{\alpha\bar{\beta}}(0).$
		Insert it into \eqref{concave} yields that
		\begin{equation}\label{concave1}
			\begin{split}
				(u-\underline{u})_{x_{n}}(p_{0})\widetilde{F}_0^{\alpha\bar{\beta}}\rho_{\alpha\bar{\beta}}(p_0) 
				\geq& \widetilde{F}[\tilde{\underline{U}}(p_0)]-\widetilde{F}[\tilde{U}(p_0)]	\\
				= &\widetilde{F}[\tilde{\underline{U}}(p_0)]-\tilde{m}-\cot\hat{\theta}-\partial_{t}u(p_0)\\
				= &\widetilde{F}(\tilde{\underline{U}}(p_0))-\tilde{m}-\cot\hat{\theta}-\partial_{t}\underline{u}(p_0)\\
				\geq &\widetilde{F}[\tilde{\underline{U}}(p_0)]-\cot\Theta(\lambda(\underline{U}(p_0)))-\tilde{m}\\
				\geq& \tilde{c}-\tilde{m}.
			\end{split}
		\end{equation}
		Here the constant $\tilde{c}=\lim_{D\rightarrow \infty}\min_{\mathcal{S}}[	\cot\Theta(\lambda'(\tilde{\underline{U}}),D)-\cot\Theta(\lambda(\underline{U}))]>0$ by the monotonicity of  $\lambda\mapsto\cot\Theta(\lambda)$.
		Now we divide the proof of \eqref{reduced goal} into two cases.

		\textbf{Case 1.} Assume
		$
		(u-\underline{u})_{x_{n}}(p_0)\widetilde{F}_0^{\alpha\bar{\beta}}\rho_{\alpha\bar{\beta}}(0) \leq \frac{\tilde{c}}{2}.
		$
		Then \eqref{concave1} gives us $\tilde{m}\geq \frac{\tilde{c}}{2}>0$ and we  are done.

		\textbf{Case 2.} Suppose now that
		\begin{equation}\label{case 2}
			(u-\underline{u})_{x_{n}}(p_0)\widetilde{F}_0^{\alpha\bar{\beta}}\rho_{\alpha\bar{\beta}}(0) \geq \frac{\tilde{c}}{2}.
		\end{equation}
		Let us denote $\eta=\widetilde{F}_0^{\alpha\bar{\beta}}\rho_{\alpha\bar{\beta}}.$
		Note that $(u-\underline{u})_{x_{n}}(p_{0})\geq 0$ and hence also strictly positive by \eqref{case 2}. It follows that
		\begin{equation}\label{lower bound of eta}
			\eta(p_{0})	\geq \frac{\tilde{c}}{2(u-\underline{u})_{x_{n}}(p_{0})}\geq 2\epsilon_1\tilde{c} 
		\end{equation}
		for some uniform constant $\epsilon_1>0$. We may assume $\eta\geq \epsilon_1\tilde{c}$ in a neighborhood $\mathcal{Q}_{\epsilon,p_0}$ of $p_0$ when $\epsilon$ is sufficiently small.

		Let us define a function $\Phi$ in $\mathcal{Q}_{\epsilon,p_0}$ by
		\begin{equation*}
			\begin{split}
				\Phi=&-\eta(u-\underline{u})_{x_{n}}+\widetilde{F}_0^{\alpha\bar{\beta}}	\big(\tilde{\underline{U}}_{\alpha\bar{\beta}}-\tilde{U}_{\alpha\bar{\beta}}(p_0)\big)		-\partial_{t}u+\partial_{t}u(p_0). \\
			\end{split}
		\end{equation*}
		A straightforward computation gives
		\begin{equation*}
			-\eta\,(u-\underline{u})_{x_{n}}=\widetilde{F}_0^{\alpha\bar{\beta}}	\big(\tilde{U}_{\alpha\bar{\beta}}-\tilde{\underline{U}}_{\alpha\bar{\beta}}\big).	
		\end{equation*}
		This together with \eqref{concave} yield that
		\begin{equation*}
			\begin{split}
				\Phi&=\widetilde{F}_0^{\alpha\bar{\beta}}	\big(\tilde{U}_{\alpha\bar{\beta}}-\tilde{U}_{\alpha\bar{\beta}}(p_0)\big)-\partial_{t}u+\partial_{t}u(p_0)\\
				&\geq  \widetilde{F}[\tilde{U}]- \widetilde{F}[\tilde{U}(p_0)]	-\partial_{t}u+\partial_{t}u(p_0)\\
				&= \widetilde{F}(\tilde{U})-\cot\hat{\theta}-\partial_{t}u- \tilde{m}.\\
			\end{split}
		\end{equation*}
		It follows that $\Phi(p_0)=0$ and $\Phi\geq 0$ on $\mathcal{S}\cap \overline{\mathcal{Q}}_{\epsilon,p_0}$. Let us consider the barrier function
		\begin{equation*}
			\Psi=\Phi+B_2 v+B_1|z|^{2} \qquad  \text{in}\, \mathcal{Q}_{\epsilon,p_0},
		\end{equation*}
		where $B_1,B_2$ are certain positive constants.	Using a similar argument as in \eqref{max principle for w}, we obtain \[\underset{x\rightarrow\partial_{p}\mathcal{Q}_{\epsilon,p_{0}}}{\lim\inf}\Psi\geq 0,\qquad L\Psi\geq 0 \quad \text{in}\ \mathcal{Q}_{\epsilon,p_0}\]
		as long as $B_2\gg B_1\geq C\epsilon^{2}$. Therefore, we have $\Phi_{x_n}(p_0)\geq -C_{5}$. This together with \eqref{lower bound of eta}  give a uniform upper bound for $u_{x_{n}x_{n}}(p_0)$.
		
		Now we are in a position of all the eigenvalues of $U$ are bounded from above, so $\lambda(U)(p_0)$  lies in a compact subset of $\Gamma$. Using the monotonicity of $\lambda\mapsto\cot\Theta(\lambda)$ again, 
		\begin{equation*}
			\tilde{m}\geq \tilde{m}_{D}=\cot\Theta(\lambda'(\tilde{U}
			(p_0)),D)-\cot\hat{\theta}-\partial_{t}u(p_0)>0
		\end{equation*}
		when $D$ is sufficiently large. 
		
		Consequently, we have finished the proof of \eqref{reduced goal} if we choose $c_0=\min\{\frac{\Tilde{c}}{2},\tilde{m}_D\}$ for a large constant $D$.
		
		\medskip
		Once we have tackled the global second order estimates, the equation \eqref{flow} is then uniformly parabolic. Using a standard PDE argument, one can also obtain the higher order estimates. We omit the standard step here.
		
		\section{Convergence}
		
		Consider the potential space
		\begin{equation*}
			\mathcal{H}=\{v\in C^{\infty}(\Omega)\ |\ \Theta(\Hess\, v)\in (0,\frac{\pi}{2}),\, v|_{\partial \Omega}=\vp\}.
		\end{equation*}

		\subsection{Calabi-Yau  functional}
		Fix $\phi\in\mathcal{H}$ and for each $\psi\in\mathcal{H}$, we define
		\begin{equation}\label{CY}
			\begin{split}
				CY_{\phi}(\psi)=&\int_{0}^1\int_{\Omega}\partial_{s}v \Big[(\Hess\, v+\sqrt{-1}\omega_0)^{n}-(\Hess\, \phi+\sqrt{-1}\omega_0)^{n}\Big]ds
				\\&+\int_{\Omega}\psi(\Hess\, \phi
				+\sqrt{-1}\omega_0)^{n},
			\end{split}
		\end{equation}
		where $v(\cdot,s)$ is a path in $\mathcal{H}$ which connects $\phi $ and $ \psi$, and $\omega_0=\sqrt{-1}\sum_{i=1}^{n}dz_{i}\wedge d\bar{z}_{i} $ is the standard metric in $\mathbb{C}^{n}$. 
		\begin{proposition}
			$CY_{\phi}(\psi)$ is well-defined and satisfies
			\begin{equation}\label{variation}
				\delta CY_{\phi}(\psi)=\int_{\Omega}(\delta\psi)(\Hess\, \psi +\sqrt{-1}\omega_0)^{n}.
			\end{equation}
		\end{proposition}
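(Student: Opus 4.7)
The plan is to prove both assertions by a single telescoping computation. For any smooth two-parameter family $v(\cdot, s, \tau) \in \mathcal{H}$ with $(s,\tau) \in [0,1]^2$, write $\mu(v) := (\Hess\, v + \sqrt{-1}\omega_0)^n$ and establish the identity
\[
\frac{d}{d\tau}\int_0^1 \int_\Omega (\partial_s v)\big[\mu(v) - \mu(\phi)\big]\, ds \;=\; \int_\Omega \Big[(\partial_\tau v)\big(\mu(v) - \mu(\phi)\big)\Big]_{s=0}^{s=1}.
\]
Since $v(\cdot, 0, \tau) = \phi$, the $s=0$ contribution vanishes automatically. Well-definedness follows by applying this to a homotopy of paths from $\phi$ to $\psi$ with $v(\cdot,1,\tau) = \psi$ fixed (so $\partial_\tau v|_{s=1} = 0$ and the right-hand side vanishes), and the variational formula follows by applying it to a family whose endpoint varies.

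The heart of the argument is the symmetric integration-by-parts identity
\[
\int_\Omega (\partial_s v)\, \partial_\tau \mu(v) \;=\; \int_\Omega (\partial_\tau v)\, \partial_s \mu(v).
\]
To verify it, I would expand $\partial_\tau \mu(v) = n\,\Hess(\partial_\tau v) \wedge (\Hess\, v + \sqrt{-1}\omega_0)^{n-1}$ and observe that $\gamma := (\Hess\, v + \sqrt{-1}\omega_0)^{n-1}$ is a closed $(n-1,n-1)$-form because both $\omega_0$ and $\sqrt{-1}\partial\bar\partial v$ are $d$-closed. Integrating by parts twice via Stokes' theorem moves the $\sqrt{-1}\partial\bar\partial$ from $\partial_\tau v$ onto $\partial_s v$. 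Since every $v(\cdot, s, \tau)$ lies in $\mathcal{H}$ and hence agrees with $\vp$ on $\partial\Omega$, both $\partial_s v$ and $\partial_\tau v$ vanish identically on $\partial\Omega$; combined with $d\gamma = 0$ this forces every Stokes boundary term to drop out cleanly. Once the swap is in hand, the product rule
\[
(\partial_\tau\partial_s v)\big[\mu(v) - \mu(\phi)\big] + (\partial_\tau v)\,\partial_s \mu(v) \;=\; \partial_s\Big\{(\partial_\tau v)\big[\mu(v) - \mu(\phi)\big]\Big\}
\]
makes the $s$-integral telescope to the displayed right-hand side.

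For the variational formula, fix $\psi_\tau \in \mathcal{H}$ with $\psi_0 = \psi$ and $\partial_\tau\psi_\tau|_{\tau=0} = \delta\psi$ (so $\delta\psi|_{\partial\Omega} = 0$), choose any smooth family of paths $v(\cdot, s, \tau)$ from $\phi$ to $\psi_\tau$, and apply the telescoping identity at $\tau=0$: this yields $\int_\Omega (\delta\psi)[\mu(\psi) - \mu(\phi)]$. Differentiating the remaining term $\int_\Omega \psi_\tau\,\mu(\phi)$ contributes $\int_\Omega (\delta\psi)\,\mu(\phi)$, and the $\mu(\phi)$ pieces cancel to leave $\int_\Omega (\delta\psi)\,\mu(\psi)$, as claimed. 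The main technical obstacle is the double Stokes integration on a domain with boundary; the crucial ingredients are the closedness of $\gamma$ and the simultaneous vanishing of $\partial_s v$ and $\partial_\tau v$ on $\partial\Omega$, after which both boundary integrals produced by the two integrations by parts vanish, and no spatial boundary corrections arise.
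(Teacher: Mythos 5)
Your proposal is correct and uses essentially the same mechanism as the paper: one integration by parts in the path parameter and one (double) spatial integration by parts via Stokes, with boundary terms killed by the fixed boundary value $v|_{\partial\Omega}=\varphi$ and the $d$-closedness of $(\Hess v + \sqrt{-1}\omega_0)^{n-1}$. The minor difference is organizational: the paper computes $\partial_r CY_\phi(\psi)=I_1+I_2$ and shows $I_1=-I_2$ directly, then states that the variational formula \eqref{variation} follows ``in a similar way'' without carrying it out, whereas you package both claims into a single telescoping identity in $(s,\tau)$ and then specialize, which yields the variational formula explicitly rather than by analogy.
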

		\begin{proof}
			We  need to show that the integration defined in \eqref{CY} is independent of the choice of path $v(\cdot,s)$ in $\mathcal{H}$.

			Let $v(\cdot, s, r)$ be a family of paths in $\mathcal{H}$ satisfying $v(\cdot,0, r)=\phi$ and $v(\cdot, 1, r)=\psi$ with $ s, r\in[0,1]$.
			Let $ w(\cdot, \cdot,r)=\partial_{r}v(\cdot,\cdot, r)$ satisfying
			\[w(\cdot,\cdot,1)=w(\cdot,\cdot, 0)=0\text{\ \ and\ \ } w(z,\cdot,\cdot)=0 \text{\, for\, } z\in \partial \Omega.\]
			Then, using $w(\cdot,\cdot,1)=0$,
			\begin{equation*}
				\begin{split}
					\partial_r CY_{\phi}(\psi)=&\int_{0}^{1}\int_{\Omega}\partial_{s} w \Big[(\Hess\, v+\sqrt{-1}\omega_0)^{n}-(\Hess\, \phi+\sqrt{-1}\omega_0)^{n}\Big]ds\\
					&+\int_{0}^{1}\int_{\Omega}\partial_{s}v\cdot n\Hess\,w\wedge (\Hess\, v+\sqrt{-1}\omega_0)^{n-1}ds\\
					=&I_{1}+I_{2}.
				\end{split}
			\end{equation*}
			For the first term $I_1$, using integration by parts with respect to $s$ and $z$ respectively, we have
			\begin{equation*}
				\begin{split}
					I_{1}&=-\int_{0}^{1}\int_{\Omega} w\cdot n\Hess(\partial_{s}v)\wedge (\Hess\, u+\sqrt{-1}\omega_0)^{n-1}ds\\
					&=-\int_{0}^{1}\int_{\Omega}\partial_{s}v\cdot n\Hess\,w \wedge (\Hess\, u+\sqrt{-1}\omega_0)^{n-1}ds.
				\end{split}
			\end{equation*}
			Therefore $\partial_r CY_{\phi}(\psi)=0$. It follows that $CY_{\phi}(\psi)$ is well-defined. In a similar way one can also prove \eqref{variation}.
		\end{proof}
		\subsection{$J$-functional}		
		
		Let $u(\cdot,s)$ be a path in $\mathcal{H}$ from $u(\cdot,0)=\phi $ to $ u(\cdot,1)=\psi$.
		Now, we introduce the so-called $J$-functional  by \begin{equation}\label{J functional}
			J(u)=\mathrm{Im}(e^{-\sqrt{-1}\hat{\theta}}CY(u)).
		\end{equation}
		
		\begin{claim}\label{delta Ju}
			For each $u\in \mathcal{H}$,
			\begin{equation*}
				\begin{split}
					\mathrm{Im}\Big(e^{-\sqrt{-1}\hat{\theta}}&(\Hess\, u+\sqrt{-1}\omega_0)^{n}\Big)\\
					=&-\sin( \hat{\theta})\Big[\cot\Theta(\Hess\,u)-\cot\hat{\theta} \Big]\mathrm{Im}(\Hess\, u+\sqrt{-1}\omega_0)^{n}.
				\end{split}
			\end{equation*}
		\end{claim}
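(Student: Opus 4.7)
The plan is to work pointwise and diagonalize. Fix $p\in\Omega$ and choose unitary coordinates in which $u_{i\bar j}(p)=\lambda_i\delta_{ij}$. Since $u\in\mathcal{H}$, each $\arccot\lambda_i\in(0,\pi/2)$ (otherwise the sum $\Theta$ would exceed $\pi/2$), so in particular $\lambda_i>0$ for all $i$. Set $\theta_i=\arccot\lambda_i$, so that $\Theta(\Hess u)=\sum_i\theta_i$ and $\lambda_i+\sqrt{-1}=\cot\theta_i+\sqrt{-1}=e^{\sqrt{-1}\theta_i}/\sin\theta_i$.

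Next I compute the product. In these coordinates
\[
\Hess u+\sqrt{-1}\,\omega_0=\sqrt{-1}\sum_{i=1}^n(\lambda_i+\sqrt{-1})\,dz^i\wedge d\bar z^i,
\]
so, writing $\Omega_0=dz^1\wedge d\bar z^1\wedge\cdots\wedge dz^n\wedge d\bar z^n$, expansion of the $n$-th power gives
\[
(\Hess u+\sqrt{-1}\,\omega_0)^n=n!\,(\sqrt{-1})^n\prod_{i=1}^n(\lambda_i+\sqrt{-1})\,\Omega_0=\frac{n!\,(\sqrt{-1})^n e^{\sqrt{-1}\Theta(\Hess u)}}{\prod_i\sin\theta_i}\,\Omega_0.
\]
Since $(\sqrt{-1})^n\Omega_0$ is a positive real multiple of the Euclidean volume form (indeed $=2^n dx^1\wedge dy^1\wedge\cdots\wedge dx^n\wedge dy^n$), taking imaginary parts just picks out the sine of the accumulated phase:
\[
\mathrm{Im}(\Hess u+\sqrt{-1}\,\omega_0)^n=\frac{n!\,2^n\sin\Theta(\Hess u)}{\prod_i\sin\theta_i}\,dV,
\]
\[
\mathrm{Im}\bigl(e^{-\sqrt{-1}\hat\theta}(\Hess u+\sqrt{-1}\,\omega_0)^n\bigr)=\frac{n!\,2^n\sin(\Theta(\Hess u)-\hat\theta)}{\prod_i\sin\theta_i}\,dV.
\]

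Finally, the desired identity reduces to the trigonometric identity
\[
\sin(\Theta-\hat\theta)=-\sin\hat\theta\,\sin\Theta\,\bigl[\cot\Theta-\cot\hat\theta\bigr],
\]
with $\Theta=\Theta(\Hess u)$, which is immediate from the angle-difference formula: the right-hand side expands to $-\sin\hat\theta\cos\Theta+\cos\hat\theta\sin\Theta=\sin(\Theta-\hat\theta)$. Dividing the two displayed imaginary parts gives the claim at $p$, and since $p$ was arbitrary the identity holds on $\Omega$. There is no real obstacle here; the only mild bookkeeping point is making sure the factor $(\sqrt{-1})^n\Omega_0$ is identified as a positive real multiple of the volume form, so that $\mathrm{Im}$ truly commutes with the scalar factor and the ratio of the two imaginary parts is exactly $\sin(\Theta-\hat\theta)/\sin\Theta$.
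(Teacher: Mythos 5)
Your proof is correct, but it takes a genuinely different route from the paper's. The paper never diagonalizes: it simply writes $e^{-\sqrt{-1}\hat\theta}=\cos\hat\theta-\sqrt{-1}\sin\hat\theta$, expands $(\Hess u+\sqrt{-1}\omega_0)^n$ into its real and imaginary parts, reads off
\[
\mathrm{Im}\bigl(e^{-\sqrt{-1}\hat\theta}(\Hess u+\sqrt{-1}\omega_0)^n\bigr)=\cos\hat\theta\,\mathrm{Im}(\Hess u+\sqrt{-1}\omega_0)^n-\sin\hat\theta\,\mathrm{Re}(\Hess u+\sqrt{-1}\omega_0)^n,
\]
factors out $-\sin\hat\theta\,\mathrm{Im}(\Hess u+\sqrt{-1}\omega_0)^n$, and then invokes the identity (stated at the start of Section 2) that $\cot\Theta(\Hess u)$ equals the ratio $\mathrm{Re}/\mathrm{Im}$ of that $n$-form. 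This is coordinate-free and purely algebraic; no eigenvalues appear. You instead fix a point, diagonalize $u_{i\bar j}$, factor each $\lambda_i+\sqrt{-1}$ as $e^{\sqrt{-1}\theta_i}/\sin\theta_i$, exhibit $(\Hess u+\sqrt{-1}\omega_0)^n$ as a positive real multiple of $e^{\sqrt{-1}\Theta}$ times the volume form, and reduce the claim to the classical angle-subtraction identity $\sin(\Theta-\hat\theta)=-\sin\hat\theta\,\sin\Theta\,(\cot\Theta-\cot\hat\theta)$. What your route buys is a transparent geometric picture of why the formula is true (the top form literally carries the phase $e^{\sqrt{-1}\Theta}$, and multiplying by $e^{-\sqrt{-1}\hat\theta}$ shifts it), and it also makes visible that $\mathrm{Im}(\Hess u+\sqrt{-1}\omega_0)^n$ is a positive volume form on $\mathcal H$, which is used later in the convergence argument. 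What the paper's route buys is brevity and the fact that it needs neither a pointwise diagonalization nor the extra bookkeeping about $(\sqrt{-1})^n\Omega_0$ being a positive multiple of $dV$. Both are complete; the underlying trigonometry is, of course, the same angle-subtraction formula in two disguises.
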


		\begin{proof}[Proof of Claim \ref{delta Ju}]
			By a direct calculation,
			\begin{equation*}
				\begin{split}
					\mathrm{Im}&\Big(e^{-\sqrt{-1}\hat{\theta}}(\Hess\, u+\sqrt{-1}\omega_0)^{n}\Big)\\
					=&\mathrm{Im}\Big((\cos(\hat{\theta})-\sqrt{-1}\sin( \hat{\theta}))\big(\mathrm{Re}(\Hess\, u+\sqrt{-1}\omega_0)^{n}+\sqrt{-1}\mathrm{Im}(\Hess\, u+\sqrt{-1}\omega_0)^{n}\big)\Big)\\
					=&\cos(\hat{\theta})\mathrm{Im}(\Hess\, u+\sqrt{-1}\omega_0)^{n}-\sin( \hat{\theta})\mathrm{Re}(\Hess\, u+\sqrt{-1}\omega_0)^{n}\\
					=&-\sin( \hat{\theta})\Big[\frac{\mathrm{Re}(\Hess\, u+\sqrt{-1}\omega_0)^{n}}{\mathrm{Im}(\Hess\, u+\sqrt{-1}\omega_0)^{n}}-\cot\hat{\theta} \Big]\mathrm{Im}(\Hess\, u+\sqrt{-1}\omega_0)^{n}\\
					=&-\sin( \hat{\theta})\Big[\cot\Theta(\Hess\,u)-\cot\hat{\theta} \Big]\mathrm{Im}(\Hess\, u+\sqrt{-1}\omega_0)^{n}.
				\end{split}
			\end{equation*}
			This completes the proof.
		\end{proof}

		By  Claim \ref{delta Ju}, we immediately see that the variation of $J$ is given by
		\begin{equation*}
			\delta J(u)=-\int_{\Omega}  (\delta u)\cdot \sin( \hat{\theta})\Big[\cot\Theta(\Hess\,u)-\cot\hat{\theta} \Big]\mathrm{Im}(\Hess\, u+\sqrt{-1}\omega_0)^{n}.
		\end{equation*}
		Then the  (negative) gradient flow of $J$-functional is exactly the heat flow \eqref{flow}, i.e.
		\begin{equation*}
			\frac{d}{dt} J(u)=-\int_{\Omega} (\partial_{t}u)^{2}\cdot \sin( \hat{\theta})\mathrm{Im}(\Hess\, u+\sqrt{-1}\omega_0)^{n}\leq 0.
		\end{equation*}
		Thus $J(u(\cdot,t))$ is monotonic decreasing as $t\rightarrow +\infty$.

		Combining with \eqref{J functional}, $J(u)$ is also bounded. Therefore, 
		\begin{equation}\label{integrablity}
			\int_{0}^{\infty}\int_{\Omega} (\partial_{t}u)^{2}\cdot\sin( \hat{\theta}) \mathrm{Im}(\Hess\, u+\sqrt{-1}\omega_0)^{n}dt<+\infty.
		\end{equation}
		We set 
		$S(t)=\int_{\Omega} (\partial_{t}u)^{2}\cdot\sin( \hat{\theta}) \mathrm{Im}(\Hess\, u+\sqrt{-1}\omega_0)^{n}\geq 0.$
		Thus \eqref{integrablity} implies 
		\begin{equation}\label{finite int}
			\int_{0}^{\infty}S(t)dt<\infty.
		\end{equation}
		
		\begin{claim}\label{St goes to 0}
			$S(t)\rightarrow 0$, as $t\rightarrow +\infty$.
		\end{claim}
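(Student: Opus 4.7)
The plan is to deduce the claim from the integrability \eqref{finite int} together with a uniform Lipschitz bound on $S(t)$; this is a standard real-analysis fact (a nonnegative, uniformly Lipschitz function whose integral over $[0,\infty)$ is finite must tend to $0$). So the whole task reduces to proving $|S'(t)|\leq C$ for a constant $C$ independent of $t$.

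First I would invoke the a priori estimates already established in Section~2. Combined with the standard parabolic bootstrap (which is legitimate once the equation is uniformly parabolic, as remarked at the end of the second-order estimate subsection), these give uniform $C^{k}$ bounds for $u$ on $\overline{\Omega}\times[0,\infty)$ for every $k$. In particular $\partial_t u$, $F^{i\bar{j}}(\mathrm{Hess}\,u)$, $(\mathrm{Hess}\,u+\sqrt{-1}\omega_0)^n$ and their first time derivatives are all bounded uniformly in $t$.

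Next I would differentiate $S(t)$. Writing $f=\partial_t u$ and $\Omega_u=\mathrm{Im}(\mathrm{Hess}\,u+\sqrt{-1}\omega_0)^n$,
\begin{equation*}
S'(t)=\sin(\hat\theta)\int_{\Omega}\bigl(2f\,\partial_t f\bigr)\Omega_u\,+\,\sin(\hat\theta)\int_\Omega f^{2}\,\partial_t \Omega_u.
\end{equation*}
Differentiating the flow equation in $t$ yields $\partial_t f=F^{i\bar j}f_{i\bar j}$, while $\partial_t\Omega_u$ is a polynomial in $u_{i\bar j}$ and $(\partial_t u)_{i\bar j}$. By the uniform $C^\infty$ estimates the integrands are bounded pointwise by a constant depending only on the a priori bounds, and $\Omega$ has finite volume, so $|S'(t)|\leq C$ uniformly.

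Finally I would conclude: if, for contradiction, $S(t)\not\to 0$, there exist $\varepsilon>0$ and a sequence $t_k\to\infty$ with $S(t_k)\geq\varepsilon$. The Lipschitz bound forces $S(t)\geq \varepsilon/2$ on each interval $[t_k-\tfrac{\varepsilon}{2C},t_k+\tfrac{\varepsilon}{2C}]$, which (after passing to a subsequence with disjoint intervals) contradicts \eqref{finite int}. The main (and essentially only) obstacle is ensuring the uniform higher-order regularity needed to control $\partial_t f$ and $\partial_t\Omega_u$; once the $C^{2,1}$ estimates of Section~2 are in hand this is a routine Krylov/Schauder bootstrap, so the argument is short.
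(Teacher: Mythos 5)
Your proof is correct, but it takes a genuinely different route from the paper's. The paper derives the sharper differential inequality $S' \leq C(S+\sqrt{S})$, integrates it to obtain the Gronwall-type self-control estimate $\sqrt{S(t)}+1 \leq e^{C(t-s)}(\sqrt{S(s)}+1)$ for $t>s$, and then combines this with the measure-theoretic consequence of \eqref{finite int} (for any $\epsilon>0$ the set $\{t>t_0 : S(t)>\epsilon\}$ has measure $\leq\epsilon$) to conclude. You instead observe that once the uniform $C^\infty$ estimates are in place, the cruder bound $|S'(t)|\leq C$ already holds, and then invoke the standard real-analysis fact (Barbalat's lemma) that a nonnegative, uniformly Lipschitz, integrable function on $[0,\infty)$ tends to zero. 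Both arguments require exactly the same a priori inputs -- the $C^{2,1}$ estimates of Section 2, $\partial_t u$ bounded via Proposition \ref{hyperut}, and the bootstrap to control $\mathrm{Hess}\,\partial_t u$ -- so neither is more economical in terms of regularity. What the paper's finer bound $S'\leq C(S+\sqrt{S})$ buys is that the ODE estimate becomes self-improving as $S$ gets small, whereas your argument just uses the uniform Lipschitz constant; in the present context $S$ is a priori bounded so this makes no practical difference, and your version is slightly more elementary. One small point worth flagging: in carrying out the contradiction in your final step, you should confirm the intervals $[t_k-\tfrac{\epsilon}{2C},t_k+\tfrac{\epsilon}{2C}]$ can be taken pairwise disjoint (which is automatic after passing to a sparse subsequence of $t_k$), but this is routine.
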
	
		
		\begin{proof}[Proof of Claim \ref{St goes to 0}]
			Differentiating \eqref{flow} in t, we have
			\begin{equation*}
				\begin{split}
					\partial^{2}_{t}u=&\frac{n\text{Re}(\Hess\,u\wedge (\Hess \,u+\sqrt{-1}\omega_0)^{n-1})}{\text{Im}(\Hess\,u+\sqrt{-1}\omega_0)^{n})}\\
					&-\cot \Theta(\Hess\,u) \frac{n\text{Im}(\Hess\,u\wedge (\Hess\,u+\sqrt{-1}\omega_0)^{n-1})}{\text{Im}(\Hess\,u+\sqrt{-1}\omega_0)^n)}.    
				\end{split}
			\end{equation*}
			Based on the a priori estimates in Section 2, it follows that
			\begin{equation*}
				\begin{split}
					S'=\int_{\Omega}&2n \partial_{t}u\sin (\hat{\theta})\mathrm{Re}(\Hess\,u\wedge (\Hess\,u+\sqrt{-1}\omega_0)^{n-1})\\
					&-\int_{\Omega}2n\partial_{t}u\cos(\hat{\theta})\mathrm{Im}(\Hess\,u\wedge (\Hess\,u+\sqrt{-1}\omega_0)^{n-1})\\
					&+\int_{\Omega}n (\partial_{t}u)^2\sin (\hat{\theta})\mathrm{Im}(\Hess\,u\wedge (\Hess\,u+\sqrt{-1}\omega_0)^{n-1})\\
					\leq C\int_{\Omega}&\big((\partial_{t}u)^2+|\partial_{t}u|\big)\omega_0^{n}
					\leq  C (S+\sqrt{S}),
				\end{split}
			\end{equation*}
			where in the last inequality we have used  Proposition \ref{hyperut} to derive $\mathrm{Im}(\Hess\,u+\sqrt{-1}\omega_0)^n\geq \frac{1}{C}\omega_{0}^{n}$. By solving the above ODE problem, we get
			\begin{equation}\label{self-control}
				\sqrt{S(t)}+1\leq e^{C(t-s)}(\sqrt{S(s)}+1),\qquad \forall\, t>s.
			\end{equation}
			Using \eqref{finite int}, we have for any positive constant $\epsilon$, there exists $t_{0}=t_{0}(\epsilon)$ such that  
			$
			|\{t>t_0: \, S(t)>\epsilon\}|\leq \epsilon.
			$
			Therefore,
			for any $t\geq t_0$, we can find a $t'\in \{t>t_0: \, S(t)\leq \epsilon\}$ with $0<t-t'\leq \epsilon$. Thanks to \eqref{self-control}, it follows that
			\begin{equation*}
				\sqrt{S(t)}\leq \sqrt{S(t')}e^{C\epsilon}+e^{C\epsilon}-1
				\leq \sqrt{\epsilon} e^{C\epsilon}+C\epsilon e^{C\epsilon}.
			\end{equation*}
			By arbitrariness of $\epsilon$, the Claim \ref{St goes to 0} follows.
		\end{proof}	
		
		\subsection{Proof of Theorem \ref{main theorem}}
		Since $\Omega$ is bounded, it follows from \eqref{St goes to 0} that
		\begin{equation*}
			\lim_{t\rightarrow \infty}\|\partial_{t}u(\cdot, t)\|_{L^{1}(\Omega)}=0.
		\end{equation*}
		By mean value theorem,
		\[|\cot\Theta(\Hess\,u)(\cdot, t)-\cot \hat{\theta}|\geq |\Theta(\Hess\,u)(\cdot,t)-\hat{\theta}|,\]
		which implies $\Theta(\Hess\,u)(\cdot, t)$ converges to $\hat{\theta}$ in $L^{1}(\Omega)$ as $t$ goes to $\infty$.

		Let us complete the proof of Theorem  \ref{main theorem}.
		Since $u(\cdot, t)$ are uniform $C^{\infty}$ bounded, using Arzela-Ascoli Theorem, there exists a subsequence $u(\cdot, t_{i})$ which converges  to   $u_{\infty}$ in $C^{\infty}$-topology.
		
		Now we prove $u(\cdot,t)$ converges to a function  $u_{\infty}$ in $\overline{\Omega}$ in $C^{\infty}$-topology. Indeed, suppose not. Then there exists an integer $k$, a constant $\epsilon>0$ and a time sequence  $t'_i\rightarrow \infty$ such that
		\begin{equation}\label{contradiction}
			\|u(\cdot, t'_i)-u_{\infty}\|_{C^{k}}\geq \epsilon, \qquad \text{\ for all \ } i.
		\end{equation}
		Since $u(\cdot, t'_i)$ are uniform $C^{k}$ bounded, using Arzela-Ascoli Theorem again, there exists a subsequence $u(\cdot, t'_{i_j})$ which converges  to   $u'_{\infty}$ in $C^{k}$-topology. Then by \eqref{contradiction}, we have
		\begin{equation*}\label{neq}
			\|u'_{\infty}-u_\infty\|_{C^{k}}\geq \epsilon,
		\end{equation*}
		which implies $u'_{\infty}\neq u_{\infty}$. 
		Letting $t_{i}\rightarrow \infty$ and $t'_{i_j}\rightarrow \infty$ in the \eqref{flow}, we know that $u_{\infty}$ and $u'_{\infty}$ are two distinct solutions of \eqref{dp}.   This is impossible by maximum principle.
		\qed

\end{document}